\numberwithin{equation}{section}
\newtheorem{theorem}{Theorem}[section]
\newtheorem{corollary}{Corollary}[section]
\theoremstyle{definition}
\newtheorem{dfn}{Definition}[section]
\newtheorem{remark}{Remark}[section]
\title{Tensor field tomography with attenuation and refraction: adjoint operators for the dynamic case and numerical experiments}
\author{
  Lukas Vierus \\
  Department of Mathematics\\
  Saarland University\\
  Saarbr\"ucken, Germany \\
  \texttt{vierus@num.uni-sb.de} \\
       \And
  Thomas Schuster \\
  Department of Mathematics\\
  Saarland University\\
  Saarbr\"ucken, Germany \\
  \texttt{thomas.schuster@num.uni-sb.de} \\
        \And
  Bernadette Hahn \\
  Department of Mathematics\\
  University of Stuttgart\\
  Stuttgart, Germany \\
  \texttt{bernadette.hahn@imng.uni-stuttgart.de} \\
}
\begin{document}
\maketitle
\begin{abstract}


This article is concerned with tensor field tomography in a fairly general setting, that takes refraction, attenuation and time-dependence of tensor fields into account. The mathematical model is given by attenuated ray transforms of the fields along geodesic curves corresponding to a Riemannian metric that is defined by the index of refraction. The data are given at the boundary tangent bundle of the domain and it is well-known that they can be characterized as boundary data of a transport equation turning tensor field tomography into an inverse source problem. This way the adjoint of the forward mapping can be computed using the integral representation or, equivalently, associated to a dual transport equation. The article offers and proves two different representations for the adjoint mappings both in the dynamic and static case. The numerical implementation is demonstrated and evaluated for static fields using the damped Landweber method with Nesterov acceleration applied to both, the integral and PDE-based formulations. The transport equations are solved using a viscosity approximation. The error analysis reveals that the integral representation significantly outperforms PDE-based methods in terms of computational efficiency while achieving comparable reconstruction accuracy. The impact of noise and deviations from straight-line trajectories are investigated confirming improved accuracy if refraction is taken into account. We conclude that the inclusion of refraction to the forward model pays in spite of increased numerical cost.

\end{abstract}

\keywords{dynamic tensor tomography \and inverse source problem \and transport equation \and Riemannian geometry \and viscosity solutions \and adjoint of dynamic ray transform \and geodesic equation \and Landweber method \and Nesterov acceleration
}

\smallskip




\paragraph*{Dedication:} This article is devoted to our friend, mentor and colleague Prof. Dr. Alfred K. Louis (1949-2024) who passed away far too early.\\

\smallskip


\section{Introduction}

Tensor field tomography (TFT) means the computation of a tensor field from given integral data along geodesic curves of a Riemannian metric, which is known as the ray transform of the field. In this article, we examine TFT in a broad framework for both, static and time-dependent fields within a medium that exhibits absorption and refraction. Thereby, the index of refraction determines the Riemannian metric in the considered domain.

TFT has a wide range of applications, including the reconstruction of velocity fields in liquids and gases. We refer to the seminal book of Sharafutdinov \cite{Sharafutdinov_1994} for an overview in theory and applications. The method was first introduced by Norton \cite{norton1988} in 1988, followed by fundamental contributions to Doppler tomography by Juhlin \cite{juhlin1992}, Gullberg \cite{gullberg}, Schuster \cite{schuster2008}, and Strahlen \cite{strahlen1998}. Louis developed a unified approach to inversion formulae of vector and tensor ray transforms \cite{louis2024unified}. A singular value decomposition for the 2D ray transform of vector fields can be found in \cite{derevtsov2011singular}. Prince \cite{prince1996convolution} employed vector tomography in Magnetic Resonance Imaging (MRI), while Panin et al. \cite{panin2002} applied it to diffusion tensor MRI. Procedures for tomography with limited data are discussed in Sharafutdinov \cite{SHARAFUTDINOV:2007}. Among the applications of TFT are diffraction tomography of deformations \cite{lionheart2015}, polarization tomography of quantum radiation \cite{karassiov2004} and the tomography of tensor fields of stresses of e.g. fiberglass composites \cite{puro2007}. In addition, there are polarization tomography \cite{Sharafutdinov_1994}, plasma diagnosis \cite{balandin} and photoelasticity \cite{puro2016}. Furthermore, novel methods exist, which are especially successful in biology and medicine. These include diffusion MRI tomography, which can be used to study the brain in detail. On the other hand, cross-polarized optical coherent tomography allows for a detailed examination of cells and is used for the diagnosis of cancer \cite{panin2002}. Advanced imaging modalities in biomedical contexts, where changes in tissue properties over time must be accurately captured, are addressed, e.g., in \cite{webb2022introduction}. Applying TFT in such areas may involve coupling geometric transport equations with temporal evolution models, requiring new theoretical and computational approaches.

The ray transform of a tensor field $f$ of rank $m\geq 1$ in a bounded Riemannian manifold $(M,g)$ with $M\subset \mathbb{R}^d$ is given by
\begin{align*}
[\mathcal{I}_\alpha f](p,q) = \int_{\gamma_{pq}} f(x)\cdot \dot{\gamma}_{pq}(x)
\exp\Big(-\int_{\gamma_{xq}}\alpha(t)\mathrm{d}\sigma (t)\Big)\mathrm{d}\sigma (x),
\end{align*}
where $\gamma_{pq}$ is a geodesic curve connecting two points $p,q\in \partial M$ and $\alpha\geq 0$ denotes the absorption coefficient. The inverse problem of TFT can then be formulated as:\\[1ex]
\textbf{IP TFT 1:} Compute $f$ from data $\mathcal{I} f$ that are given on a subset $S\subset (\partial M\times \partial M)$.\\[1ex]
It can be shown, see, e.g., \cite{Sharafutdinov_1994}, that this problem is equivalent to computing the source term $f$ in the transport equation
\begin{align}\label{transport_equation}
\mathcal{H} u (x,\xi) + \alpha u (x,\xi) = f\cdot \xi^m,
\end{align}
Here, $\mathcal{H}$ denotes the geodesic vector field corresponding to the metric $g$, which is explicitly given in Section \ref{Sec:ART}, $\xi\in T_x M$ is a tangent vector in $x$ and $\xi^m= \xi\otimes \cdots \otimes\xi$ is the $m$-fold tensor product of $\xi$. The equivalent formulation of IP TFT 1 reads as:\\[1ex]
\textbf{IP TFT 2:} Compute $f$ from \eqref{transport_equation} with given boundary data $u=\mathcal{I} f$ on
a subset $S\subset (\partial M\times \partial M)$.\\[1ex]
This formulation offers the possibility for a holistic approach to TFT in fairly general settings.

For time-depending tensor fields the ray transform extends to 
\begin{align*}
[\mathcal{I}_{\alpha}^d f](t, x,\xi)=\int_{\tau_{-}(x,\xi)}^{0}\langle f(t+\tau, \gamma_{x,\xi}(\tau)),\dot{\gamma}_{x,\xi}^m (\tau)\rangle \exp\Big(-\int_{\tau}^{0}\alpha(\gamma_{x,\xi}(\sigma),\dot{\gamma}_{x,\xi}(\sigma))\mathrm{d}\sigma\Big) \mathrm{d}\tau,
\end{align*}
and the corresponding transport equation then reads as
\begin{align*}
\frac{\partial u}{\partial t}(t,x,\xi) + \mathcal{H} u (t,x,\xi) + \alpha u (t,x,\xi) = f\cdot \xi^m.
\end{align*}
We refer to \cite{Sharafutdinov_1994, derevtsov2021_angular, derevtsov} regarding an introduction and elementary properties of the dynamic ray transform and a deduction of the corresponding transport equation. Dynamic tensor field tomography means then the computation of time-dependent fields $f$ from time-dependent data $\mathcal{I}_{\alpha}^d f$; the two equivalent formulations of this dynamic inverse problem is done in analogy of IP TFT 1 and IP TFT 2. It is obvious that dynamic TFT is highly underdetermined. 

While significant progress has been made in scalar and vector field tomography, the extension to time-dependent tensor fields including refraction and attenuation remains an open field. Classical reconstruction techniques based on static assumptions often fail to provide accurate results. Recent developments in motion-compensation for scalar imaging and regularization strategies for inverse problems provide a foundation for addressing such challenges. The most efficient way to compensate for the dynamics is to incorporate explicit deformation models \cite{hahn2014,katsevich2010}, i.e. their parameters have to be estimated prior to or within the reconstruction step \cite{hahn2017a,feinler2025}. Alternatively, the unknown motion can be treated and compensated for as inexactness in the forward operator \cite{blanke2020inverse}. A general approach for dynamic inverse problems based on Tikhonov-Phillips regularization was presented by Schmitt and Louis \cite{louis2002theo, louis2002appl}. All-at-once and reduced iterative methods to solve time-dependent parameter identification problems are described and analyzed in \cite{kaltenbacher2017} for a fairly general setting of dynamic inverse problems. Moreover, studies such as \cite{hahn2016} analyze how temporal variations influence the resolution and stability of the reconstruction process. In \cite{polyakovahahn2019,PolyakovaSvetov2024} the dynamic longitudinal, mixed and transverse ray transforms acting on symmetric 2-tensor fields were investigated for affine dynamics, leading to the construction of a singular value decomposition and numerical reconstruction methods of type spectral filtering.

In the context of Riemannian geometry, most results to date address static problems. We refer to \cite{eptaminitakis2025tensor, ilmavirta2024tensor, MONARD:14, paternain2021sharp, udoschuster, sharafutdinov1992, STEFANOV;UHLMANN:04, stefanov2018inverting}, what represents only a small selection. The framework introduced in \cite{stefanov2018inverting} for local inversion of the geodesic ray transform near convex boundaries could potentially be extended to time-dependent settings. Investigations like \cite{derevtsov2000influence} already numerically prove the sensitivity of tensor reconstructions to refractive effects, underscoring the need for robust models that account for both, spatial and temporal variations.

In this work, we provide in Section \ref{Sec:ART} the mathematical tools of Riemannian geometry using the metric $g$ associated with the index of refraction and introduce the attenuated geodesic ray transform for static and dynamic tensor fields. In Section \ref{Sec:adjoints} we deliver two different representations for the adjoint mappings using the integral representations on the one side and the transport equations on the other side. Section 4 presents first numerical experiments for the reconstruction of static fields by using the damped Landweber method; the reconstructions of dynamic fields is much more involved and still subject of current research. We use both representations for the adjoint operators and deliver a comparative numerical study. Our findings are summarized in a concluding section (Section 5).

\section{Attenuated ray transforms on manifolds}
\label{Sec:ART}


Let $M\subset \mathbb{R}^d$, $d=2,3$, be a compact domain with strictly convex $C^1$-boundary $\partial M$. According to Fermat’s principle, waves propagate along trajectories that minimize travel time. This observation allows one to interpret such paths as geodesics of a Riemannian manifold $(M,g)$ with metric tensor $g=n^2 I$, where $n(x)$ denotes the index of refraction. We assume, that
\[n(x) \geq c_n > 0 \quad \text{a.e.}\]
for some fixed constant $c_n>0$ implying that $g$ is uniformly positive. The tangent space at a point $x\in M$ is denoted by $T_x M$.
Let $\gamma: [a, b] \rightarrow \mathbb{R}^m$ be a smooth curve. The travel time along $\gamma$ from $\gamma(a)$ to $\gamma(b)$ is given by
\[
T(\gamma) := \int_a^b n(\gamma(\tau)) \|\dot{\gamma}(\tau)\|_{\text{eucl}} \, \mathrm{d}\tau,
\]
where $\Vert\cdot\Vert_{\text{eucl}}$ denotes the Euclidean norm in $\mathbb{R}^3$. This expression motivates the introduction of a Riemannian metric $g = n^2 I$ on $\mathbb{R}^3$ with corresponding line element
\[\mathrm{d}s^2 = n^2(x) \Vert\mathrm{d}x\Vert^2.\]
We employ Einstein's summation convention throughout the article. For a smooth function $f: \mathbb{R}^3 \rightarrow \mathbb{R}$, the Riemannian gradient is given by
\[
\nabla f = g^{ij} \partial_i f \, \partial_j = n^{-2}(x) \nabla_{\text{eucl}} f,
\]
where $g^{ij}$ are the components of the inverse of $g_{ij}$. For tangent vectors $u, v \in T_x M \cong \mathbb{R}^3$ at a given point $x\in M$, the Riemannian inner product is
$\langle u, v \rangle_x = g_{ij}(x) u_i v_j,$
and the corresponding norm satisfies $\Vert u\Vert_x= n(x) \Vert u\Vert_{\text{eucl}}$. For a better readability we omit for the remainder the subscript $x$, i.e. $\Vert \cdot \Vert$ and $\langle \cdot,\cdot\rangle$ always refer to norm and inner product in $(M,g)$.

A curve $\gamma$ that minimizes the functional $T(\gamma)$ is a geodesic with respect to the metric $g$. Such a curve satisfies the geodesic equation
\begin{align}\label{geodesic_eq}
\ddot{\gamma}^k + \Gamma_{ij}^k(\gamma) \dot{\gamma}^i \dot{\gamma}^j = 0
\end{align}
with initial conditions $\gamma (0)=x$, $\dot{\gamma}(0)=\xi\in T_x M.$ The Christoffel symbols $\Gamma_{ij}^k$ associated with $g$ compute as
\[
\Gamma_{ij}^k(x) = n^{-1}(x) \left( \frac{\partial n}{\partial x_j}(x) \delta_{ik} + \frac{\partial n}{\partial x_i}(x) \delta_{jk} - \frac{\partial n}{\partial x_k}(x) \delta_{ij} \right).
\]
The geodesic equation admits a unique solution $\gamma = \gamma_{x,\xi}$, depending smoothly on $x$ and $\xi$. We refer, e.g., to \cite{udoschuster, Sharafutdinov_1994} for details. Following Fermat's principle, in a medium with smooth refractive index, wave propagation occurs along geodesics that are uniquely determined by the initial point and direction.

We demand that for every given point $x\in M$ and non-zero vector $\xi\in T_x M$ the geodesic $\gamma_{x,\xi}(t)$ cannot be extended further than to a finite interval $[\tau_{-}(x,\xi),\tau_{+}(x,\xi)]$. This means that all geodesics have a finite length, where $\gamma_{x,\xi}(\tau_{\mp}(x,\xi))$ are entry and exit point of a geodesic that is for $\tau=0$ at position $x$ and moves in direction $\xi\in T_x M$, see Figure \ref{geo_plot}.

\begin{figure}[H]
\begin{center}
   \includegraphics[scale=0.7]{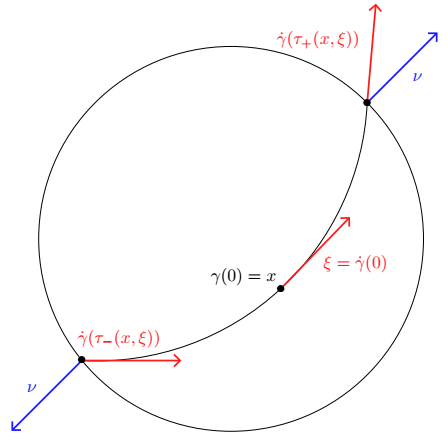}
  \caption{Sketch of a geodesic curve $\gamma_{x,\xi}$}
  \label{geo_plot}
\end{center}
\end{figure}

The tangent bundle of the manifold $M$ is denoted by
\begin{align*}
TM=\lbrace(x,\xi)\vert\ x\in M,\ \xi\in T_x M\rbrace
\end{align*}
and its submanifolds consisting of unit vectors by
\begin{align*}
\Omega_xM := \lbrace \xi\in T_xM \vert \Vert\xi\Vert_x = 1 \rbrace,\qquad
\Omega M= \bigcup_{x\in M} \Omega_x M.
\end{align*}
The boundary of $\Omega M$ can be split into the compact submanifolds
\begin{align*}
\partial_{\pm}\Omega M &= \lbrace (x,\xi)\in \Omega M\vert\ x\in \partial M,\ \pm\langle \xi,\nu(x)\rangle\geq 0\rbrace.
\end{align*}
with $\nu(x)$ being the outer unit normal vector at $x\in \partial M$. Note, that $\tau_\pm$ are smooth on $\partial \Omega M$ (cf. \cite{Sharafutdinov_1994}). Without loss of generality, we assume that $f$ is supported in the unit sphere and set specifically $M:=\lbrace x\in \mathbb{R}^3\colon\ \Vert x\Vert_{\text{eucl}}\leq 1\rbrace$. Given an integer $m\geq 0$, we denote by $S^m$ the space of all functions $\underbrace{\mathbb{R}^3\times\dots\times\mathbb{R}^3}_{m\textit { factors}}\rightarrow \mathbb{R}$ that are $\mathbb{R}$-linear and invariant with respect to all transpositions of the indices. Moreover, we define $\tau_M=(TM, p, M)$ as the tangent bundle and $\tau_{M}'=(T'M, p', M)$ as the cotangent bundle on $M$, where $p:TM\to M$, $p'\colon T'M\to M$ are corresponding projections to $M$, $M'$, respectively. For nonnegative integers $r$ and $s$, we set $\tau_s^r M= (T_s^r M, p_s^r, M)$ as the vector bundle defined by 
\begin{align*}
\tau_r^s M=\underbrace{\tau_{M}\otimes \dots \tau_M}_{r \textrm{ times}} \otimes \underbrace{\tau_{M}'\otimes\dots \otimes \tau_{M}'}_{s \textrm{ times}}.
\end{align*}
We denote the subbundle of $\tau_{m}^{0} M$ consisting of all tensors that are symmetric in all arguments by $S^m \tau_M '$. We have now all ingredients together to define the essential integral transforms.

\begin{dfn}\label{D-ART}
For given $\alpha\in L^{\infty}(\Omega M)$ we define the $\textit{attenuated ray transform}$ of a $m$-tensor field $f=(f_{i_1\cdots i_m})$ by $\mathcal{I}_{\alpha} :L^2(S^m \tau_M ')\rightarrow L^2(\partial_+ \Omega M)$, where
\begin{equation}\label{Iaf}
[\mathcal{I}_{\alpha} f](x,\xi)=\int_{\tau_{-}(x,\xi)}^{0}\langle f(\gamma_{x,\xi}(\tau)),\dot{\gamma}_{x,\xi}^m (\tau)\rangle \exp\left(-\int_{\tau}^{0}\alpha(\gamma_{x,\xi}(\sigma),\dot{\gamma}_{x,\xi}(\sigma))\mathrm{d}\sigma\right) \mathrm{d}\tau.\\
\end{equation}
\end{dfn}

This definition can be extended to time-dependent tensor fields in a straightforward way. 

\begin{dfn}\label{D-DART}
For given $\alpha\in L^{\infty}(\Omega M)$ we define the $\textit{dynamic attenuated ray transform}$ of a $m$-tensor field $f=(f_{i_1\cdots i_m})$ by the mapping $\mathcal{I}_{\alpha}^d f:L^2(0,T; L^2(S^m \tau_M '))\rightarrow L^2(0,T; L^2(\partial_+ \Omega M))$ where
\begin{align}\label{Iadf}
[\mathcal{I}_{\alpha}^d f](t, x,\xi)=\int_{\tau_{-}(x,\xi)}^{0}\langle f(t+\tau, \gamma_{x,\xi}(\tau)),\dot{\gamma}_{x,\xi}^m (\tau)\rangle \exp\left(-\int_{\tau}^{0}\alpha(\gamma_{x,\xi}(\sigma),\dot{\gamma}_{x,\xi}(\sigma))\mathrm{d}\sigma\right) \mathrm{d}\tau.
\end{align}
\end{dfn}

It can be shown that the continuity for static tensor fields as proven in \cite{Sharafutdinov_1994} can be extended to Sobolev-Bochner spaces.

\begin{theorem}Let $(M,g)$ be a Riemannian manifold with $g_{ij}=n^2(x)\delta_{ij}$ and $n\geq 1$. Further, let $\alpha\in H^k(\Omega M)$ with $\alpha\geq \alpha_0 >0$. Then, $\mathcal{I}_\alpha^d$ from \eqref{Iadf} is bounded, i.e., for all $l\in\mathbb{N}_0$ there is a constant $C=C(T,\alpha,\Omega M)>0$, such that
\begin{align}\label{I_bounded}
\Vert \mathcal{I}_\alpha^d f\Vert_{H^l(0,T;H^k(\partial_+\Omega M))}\leq C\Vert f\Vert_{H^l(0,T;H^k(S^m\tau'_M))}.
 \end{align} 
\end{theorem}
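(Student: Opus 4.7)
My plan is to reduce \eqref{I_bounded} to the static $H^k$--boundedness of $\mathcal{I}_\alpha$ from \cite{Sharafutdinov_1994} by two essentially independent steps: commuting temporal derivatives with $\mathcal{I}_\alpha^d$, and reducing the spatial estimate at fixed $t$ to the static bound via Minkowski's integral inequality. The tensorial structure plays no essential role; it enters only through pointwise multiplication by bounded factors $\dot\gamma_{x,\xi}^m$, which have unit norm on $\Omega M$.

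\textbf{Step 1 (temporal derivatives commute with $\mathcal{I}_\alpha^d$).} The time variable $t$ enters \eqref{Iadf} only through $f(t+\tau,\gamma_{x,\xi}(\tau))$, and since $\partial_t[f(t+\tau,\,\cdot\,)]=(\partial_t f)(t+\tau,\,\cdot\,)$, differentiation under the integral sign (justified by dominated convergence for smooth $f$ and by density for general $f\in H^l(0,T;H^k)$) yields $\partial_t^j \mathcal{I}_\alpha^d f = \mathcal{I}_\alpha^d(\partial_t^j f)$ for $0\leq j\leq l$. It therefore suffices to establish the case $l=0$ and apply the resulting estimate to each $\partial_t^j f$.

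\textbf{Step 2 (spatial estimate at fixed $t$).} For fixed $t$, write $g_\tau(y):=f(t+\tau,y)$ and rewrite
\[
[\mathcal{I}_\alpha^d f](t,x,\xi)
=\int_{\tau_-(x,\xi)}^{0}\langle g_\tau(\gamma_{x,\xi}(\tau)),\dot\gamma_{x,\xi}^m(\tau)\rangle\,
e^{-\int_\tau^0 \alpha(\gamma_{x,\xi}(\sigma),\dot\gamma_{x,\xi}(\sigma))\,\mathrm{d}\sigma}\,\mathrm{d}\tau.
\]
For each $\tau\in[-T_*,0]$ (with $T_*:=\sup_{\partial_+\Omega M}|\tau_-|<\infty$ by compactness and strict convexity of $\partial M$), the inner map $g_\tau\mapsto\langle g_\tau(\gamma_{x,\xi}(\tau)),\dot\gamma_{x,\xi}^m\rangle\,e^{-\int_\tau^0\alpha\,\mathrm{d}\sigma}$ is a composition of the smooth geodesic endpoint map $(x,\xi)\mapsto\gamma_{x,\xi}(\tau)$, tensor contraction with the unit-norm factor $\dot\gamma^m$, and multiplication by the bounded attenuation weight. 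Exactly as in the proof of the static boundedness in \cite{Sharafutdinov_1994}, the smoothness of the geodesic flow (consequence of \eqref{geodesic_eq}), the assumption $\alpha\in H^k(\Omega M)$ and Moser-type algebra estimates provide a constant $C$, uniform in $\tau\in[-T_*,0]$, with $\|\langle g_\tau(\gamma_{\cdot,\cdot}(\tau)),\dot\gamma^m\rangle e^{-\int\alpha}\|_{H^k(\partial_+\Omega M)}\leq C\|g_\tau\|_{H^k(S^m\tau'_M)}$. Minkowski's integral inequality then gives
\[
\|[\mathcal{I}_\alpha^d f](t,\cdot,\cdot)\|_{H^k(\partial_+\Omega M)}
\leq C\int_{-T_*}^{0}\|f(t+\tau,\cdot)\|_{H^k(S^m\tau'_M)}\,\mathrm{d}\tau.
\]

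\textbf{Step 3 (Bochner integration).} Squaring, applying Cauchy--Schwarz in $\tau$, integrating over $(0,T)$, extending $f$ by zero outside $(0,T)$, and applying Fubini gives
\[
\|\mathcal{I}_\alpha^d f\|_{L^2(0,T;H^k(\partial_+\Omega M))}^2
\leq C^2 T_*\int_0^T\int_{-T_*}^{0}\|f(t+\tau,\cdot)\|_{H^k}^2\,\mathrm{d}\tau\,\mathrm{d}t
\leq C^2 T_*^{\,2}\,\|f\|_{L^2(0,T;H^k)}^2.
\]
Combining with Step 1 yields \eqref{I_bounded} with $C=C(T,\alpha,\Omega M)$.

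The main obstacle I anticipate is the technical verification of the uniform-in-$\tau$ $H^k$ bound claimed in Step 2. It hinges on two regularity facts that are standard but tedious: smooth dependence of $\gamma_{x,\xi}(\tau)$ and $\tau_-(x,\xi)$ on all arguments up to the boundary of $\Omega M$, and $H^k$-control of the attenuation exponential using $\alpha\geq\alpha_0>0$ together with $\alpha\in H^k$. Both are available in \cite{Sharafutdinov_1994}; the condition $\alpha\geq\alpha_0>0$ is convenient for bounding the exponential in an algebra, and $n\geq 1$ ensures that Euclidean and Riemannian Sobolev norms are equivalent, so that Moser-type estimates transfer without geometric complications.
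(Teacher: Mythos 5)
The paper itself offers no proof of this theorem---it is delegated entirely to the external reference \cite{vierus}---so your argument can only be judged on its own correctness. Your overall strategy (commute $\partial_t^{j}$ with $\mathcal{I}_\alpha^d$ to reduce to $l=0$, then reduce the dynamic estimate to the static one) is the natural one, and for $k=0$ your Steps 2--3 do go through. But Step 2 has a genuine gap for $k\geq 1$. You freeze $\tau$, claim that the sliced operator $g\mapsto \langle g(\gamma_{x,\xi}(\tau)),\dot\gamma_{x,\xi}^m(\tau)\rangle e^{-\int_\tau^0\alpha}$ is bounded on $H^k(\partial_+\Omega M)$ uniformly in $\tau$ ``exactly as in'' the static theorem of \cite{Sharafutdinov_1994}, and then apply Minkowski's inequality. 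The slice, however, is only defined where $\tau\geq\tau_{-}(x,\xi)$, i.e.\ it carries the factor $\chi_{\{\tau_{-}(x,\xi)\leq\tau\}}$, whose jump set is a hypersurface in $\partial_+\Omega M$. Already for $m=0$, $\alpha=0$, $n\equiv 1$ on the unit disc and $g\equiv 1$, the slice equals $\chi_{\{2\langle x,\xi\rangle\geq -\tau\}}$, which is not in $H^1(\partial_+\Omega M)$. So the uniform-in-$\tau$ $H^k$ bound you need is false for $k\geq 1$: Sharafutdinov's theorem bounds the \emph{integrated} transform, not its $\tau$-slices (the discontinuity of each slice is smoothed only after the $\tau$-integration near the glancing set, where $\nabla\tau_{-}$ blows up), so it cannot be invoked slice-wise and Minkowski cannot be applied at the level of $H^k$ norms. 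A correct reduction must keep the $\tau$-integral intact and differentiate the full integral in $(x,\xi)$, controlling the boundary terms generated by $\nabla\tau_{-}$ as in the static proof, with the time shift $t\mapsto t+\tau$ carried along as a parameter.

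There is a second, smaller gap in Step 1. With $f$ extended by zero outside $(0,T)$, the identity $\partial_t^{j}\mathcal{I}_\alpha^d f=\mathcal{I}_\alpha^d(\partial_t^{j}f)$ fails unless $\partial_t^{i}f(0,\cdot)=0$ for $i<j$: differentiating $\int_{\tau_{-}}^{0}\langle F(t+\tau,\cdot),\cdot\rangle\,\mathrm{d}\tau$ with $F$ the zero extension produces the additional term $\langle f(0^{+},\gamma_{x,\xi}(-t)),\dot\gamma_{x,\xi}^m(-t)\rangle e^{-\int}\chi_{\{\tau_{-}(x,\xi)\leq -t\}}$, which is again not $H^k$-regular in $(x,\xi)$. ``By density'' cannot repair an identity that already fails for smooth $f$ with $f(0,\cdot)\neq 0$; you need either compatibility conditions at $t=0$ (consistent with the initial condition $u(0,\cdot)=0$) or a bounded extension operator $H^l(0,T;H^k)\to H^l(\mathbb{R};H^k)$ together with a precise convention for what $f(t+\tau,\cdot)$ means when $t+\tau<0$ --- a convention the theorem statement itself leaves implicit.
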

\begin{proof}
See \cite{vierus}.    
\end{proof}

As shown in \cite{vierus}, $\mathcal{I}_\alpha^d f$ can be extended to a function $u$ on $[0,T]\times \Omega M$ by
\[
u(t,x,\xi)= \int_{\tau_{-}(x,\xi)}^{0}\langle f(t+\tau, \gamma_{x,\xi}(\tau)),\dot{\gamma}_{x,\xi}^m (\tau)\rangle \exp\left(-\int_{\tau}^{0}\alpha(\gamma_{x,\xi}(\sigma),\dot{\gamma}_{x,\xi}(\sigma))\mathrm{d}\sigma\right) \mathrm{d}\tau,
\]
which solves the boundary value problem
\begin{align}
\frac{\partial u}{\partial t} +\langle \nabla_x u,\xi\rangle -\Gamma_{ij}^{k}\xi^{i}\xi^{j}\frac{\partial u}{\partial \xi^k}+\alpha u &= \langle f,\xi^m\rangle, & &(x,\xi)\in \Omega M\label{eq:transport-dynamic}\\
    u &= \mathcal{I}_\alpha^d f, & &(x,\xi)\in \partial_+ \Omega M\label{eq:transport-dynamic_1}\\
    u &= 0, & &(x,\xi)\in \partial_- \Omega M
    \label{eq:transport-dynamic_2}\\
    u(0,x,\xi) &= 0, & &(x,\xi)\in \Omega M. \label{eq:transport-dynamic_3}
\end{align}
In \cite{vierus} the authors show that, adding a viscosity term $-\varepsilon \Delta u$, $\varepsilon >0$ to the left-hand side of \eqref{eq:transport-dynamic}, the obtained problem has a unique weak solution if the condition
\begin{equation}\label{eq:vierus_condition}
\sup_{x\in M}\frac{\vert \nabla n(x)\vert}{n(x)}<\alpha_0
\end{equation}
is satisfied. Assumption \eqref{eq:vierus_condition} can be interpreted that $n(x)$ varies only slowly in space. This leads to well-defined operators 
\[\mathcal{S}_\alpha^{\varepsilon,d}\colon L^2(0,T; L^2(S^m \tau_M '))\rightarrow L^2(0,T; L^2(\partial_+ \Omega M)),
\qquad \mathcal{S}_\alpha^{\varepsilon,d} (f) :=
u^\varepsilon \vert_{\partial_+ \Omega M},
\]
where $u^\varepsilon$ is the corresponding (weak) viscosity solution of \eqref{eq:transport-dynamic}-\eqref{eq:transport-dynamic_3}. Numerical experiments have confirmed that the solutions of the perturbed equations converge to a solution of \eqref{eq:transport-dynamic}-\eqref{eq:transport-dynamic_3} as $\varepsilon\to 0$ (cf. \cite{vierus}). Any such solution $u$ of \eqref{eq:transport-dynamic}-\eqref{eq:transport-dynamic_3} is then set to $u:= \mathcal{S_\alpha}^d f$. To numerically solve the inverse problem, we aim to derive representations for the adjoint operators \( (\mathcal{I}_\alpha^d)^* \) and \( (\mathcal{S}_\alpha^{d})^* \) since these are necessary, e.g., for algorithms of filtered backprojection type or the Landweber iteration.


\section{Adjoint operators of the ray transform}
\label{Sec:adjoints}


There exist representations for $\mathcal{I}_\alpha^*$ of the integral transform in a non-Euclidean static setting but only for $\alpha=0$. Using Santaló's formula it is shown in \cite{dairbekov2007boundary} that 
\begin{align}\label{adj_If}
[\mathcal{I}_0^*\psi](x) = \int_{\Omega_x M}\psi^\#(x,\xi)\xi^i \xi^j \langle\nu_x,\xi\rangle\mathrm{d}\sigma_{x}(\xi),
 \end{align}
where $\mathrm{d}\sigma_x$ is the surface measure on $\Omega_x M$ and $\psi^\#(x,\xi)$ is defined as a function that equals $\psi(x,\xi)$ on $\partial_{+}\Omega M$ and that is constant along geodesics. Note that $\eqref{adj_If}$ resembles the backprojection operator as it is used, e.g., in computerized tomography (cf. \cite{natterer_book}). In this section we derive adjoint operators for the ray transforms introduced in Section 2 via transport equations for the attenuated and dynamic case.

\begin{theorem}\label{T:adjoint_dynamic}
Let $f\in L^2(0,T; L^2(S^m \tau_M '))$ and $\phi = \mathcal{S}_\alpha^df$. Assuming that for any $h\in L^2(0,T; L^2(\partial_{+} \Omega M))$ there is a unique solution $w\in H^1(0,T; H^1(\Omega M))$ of the adjoint problem
\begin{align}
-\frac{\partial w}{\partial t}-\langle\nabla w,\xi\rangle+\Gamma_{ij}^{k}\xi^i\xi^j\frac{\partial w}{\partial \xi^k}+\left(\alpha+\Xi_n(x,\xi)\right) w &= 0,\quad &t\in [0,T],(x,\xi)\in \Omega M\label{adj_pde}
 \end{align}
with boundary and terminal conditions
\begin{align}
w(T,x,\xi)&=0,\quad (x,\xi)\in\Omega M\label{adj_ic}\\
w(t,x,\xi)&=k_h(t,x,\xi)\exp\Big(-\int_{0}^{\tau_{+}(x,\xi)}\Big(\alpha+\Xi_n\Big)(\gamma_{x,\xi}(\tilde{\tau}),\dot{\gamma}_{x,\xi}(\tilde{\tau}))\mathrm{d}\tilde{\tau}\Big),\label{adj_bc-}\quad t\in [0,T],\ (x,\xi)\in \partial\Omega M,
 \end{align}
where 
\begin{align}
k_h(t,x,\xi) = \frac{h(t,\gamma_{x,\xi}(\tau_{+}(x,\xi),\dot{\gamma}_{x,\xi}(\tau_{+}(x,\xi))}{\langle \nu_{\gamma_{x,\xi}(\tau_{+}(x,\xi)}, \dot{\gamma}_{x,\xi}(\tau_{+}(x,\xi)\rangle}
\end{align}
and
\begin{align*}
    \Xi_n (x,\xi) := \left\{ \begin{array}{cc}
    \frac{1}{2}n^{-1} (x) \langle \nabla n(x),\xi\rangle, &
    d=2\\
    n^{-1} (x) \langle \nabla n(x),\xi\rangle, &
    d=3
    \end{array} \right.
\end{align*}
Then, the adjoint operator can be computed by
\begin{align*}
(\mathcal{S}_\alpha^d)^* h(t,x) = \int_{\Omega_x M}w(t,x,\xi)\xi^m \mathrm{d}\sigma_x(\xi) \in L^2(0,T; L^2(S^m\tau_M ')).
 \end{align*}
\end{theorem}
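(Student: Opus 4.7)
The plan is to establish the adjoint identity
\begin{equation*}
\langle\mathcal{S}_\alpha^d f,h\rangle_{L^2(0,T;L^2(\partial_+\Omega M))}
=\langle f,(\mathcal{S}_\alpha^d)^* h\rangle_{L^2(0,T;L^2(S^m\tau_M'))}
\end{equation*}
by a three-fold integration by parts (in $t$, in $x\in M$, and in $\xi\in\Omega_x M$) applied to the product $w\phi$, where $\phi$ denotes the full bulk solution on $[0,T]\times\Omega M$ of the forward problem \eqref{eq:transport-dynamic}--\eqref{eq:transport-dynamic_3} (so that $\mathcal{S}_\alpha^d f=\phi|_{\partial_+\Omega M}$) and $w$ is the adjoint solution from \eqref{adj_pde}--\eqref{adj_bc-}. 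Concretely, I would multiply \eqref{eq:transport-dynamic} by $w$, integrate over $[0,T]\times M\times \Omega_x M$ with respect to $\mathrm{d}t\,\mathrm{d}x\,\mathrm{d}\sigma_x(\xi)$, and migrate every derivative acting on $\phi$ over onto $w$.

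The temporal integration by parts yields endpoint terms at $t=0$ and $t=T$ that vanish by \eqref{eq:transport-dynamic_3} and \eqref{adj_ic}, respectively. The spatial divergence theorem on $M$ rewrites $\int_M w\,\langle\nabla_x\phi,\xi\rangle\,\mathrm{d}x$ as $-\int_M \phi\,\langle\nabla_x w,\xi\rangle\,\mathrm{d}x+\int_{\partial M} w\phi\,\langle\xi,\nu\rangle\,\mathrm{d}S$ (using that $\xi$ is independent of $x$); after integration over $\Omega_x M$, the surface contribution assembles to $\int_{\partial\Omega M} w\phi\,\langle\xi,\nu\rangle\,\mathrm{d}\sigma$, whose $\partial_-\Omega M$-part drops out because $\phi\equiv 0$ there by \eqref{eq:transport-dynamic_2}. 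The key technical step, and the one I expect to be the main obstacle, is the integration by parts on the sphere $\Omega_x M$ (of Euclidean radius $1/n(x)$) for the term $-\Gamma_{ij}^k\xi^i\xi^j\partial_{\xi^k}\phi$: one has to project the vector field $V^k(\xi)=-\Gamma_{ij}^k\xi^i\xi^j$ onto $T_\xi\Omega_x M$ and compute its tangential divergence with respect to $\mathrm{d}\sigma_x$. Inserting the explicit Christoffel symbols from Section~\ref{Sec:ART}, this divergence evaluates to precisely the correction $\Xi_n(x,\xi)$ appearing in \eqref{adj_pde}, which naturally exhibits the stated dimension-dependent prefactor. No $\xi$-boundary contribution arises since $\Omega_x M$ is closed.

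Combining the three integrations by parts with \eqref{adj_pde} annihilates every bulk contribution on the left-hand side, leaving
\begin{equation*}
\int_0^T\!\!\int_{\Omega M} w\,\langle f,\xi^m\rangle\,\mathrm{d}x\,\mathrm{d}\sigma_x(\xi)\,\mathrm{d}t
=\int_0^T\!\!\int_{\partial_+\Omega M} \phi\,w\,\langle\xi,\nu\rangle\,\mathrm{d}\sigma\,\mathrm{d}t.
\end{equation*}
The boundary prescription \eqref{adj_bc-} is reverse-engineered so that the right-hand side reduces to $\int_0^T\!\int_{\partial_+\Omega M}\phi\,h\,\mathrm{d}\sigma\,\mathrm{d}t$: the denominator $\langle\nu,\xi\rangle$ in $k_h$ cancels the Jacobian factor $\langle\xi,\nu\rangle$, while the exponential weight transports the datum back along the geodesic through the zeroth-order coefficient $\alpha+\Xi_n$ just identified. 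The non-routine check here is to verify that this exponential indeed coincides with the attenuation accumulated by solving the adjoint characteristic ODE from the exit point $(\gamma_{x,\xi}(\tau_+),\dot\gamma_{x,\xi}(\tau_+))$ back to $(x,\xi)$, which follows by differentiating along the geodesic flow.

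By $\xi^m$-multilinearity of the tensor pairing, the bulk integral rewrites as
\begin{equation*}
\int_0^T\!\!\int_M \Big\langle f(t,x),\,\int_{\Omega_x M}w(t,x,\xi)\,\xi^m\,\mathrm{d}\sigma_x(\xi)\Big\rangle\,\mathrm{d}x\,\mathrm{d}t,
\end{equation*}
and, since $f$ is arbitrary, the inner integral must equal $(\mathcal{S}_\alpha^d)^* h(t,x)$, which is the claimed formula. The regularity $w\in H^1(0,T;H^1(\Omega M))$ assumed in the hypothesis is exactly what is needed to justify all the integrations by parts.
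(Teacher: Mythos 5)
Your proposal is correct and follows essentially the same route as the paper's proof: multiply the forward transport equation by $w$, integrate by parts in $t$, $x$ and $\xi$ over $[0,T]\times\Omega M$, use the adjoint PDE \eqref{adj_pde} and terminal condition \eqref{adj_ic} to annihilate the bulk and endpoint terms, and recover the boundary prescription \eqref{adj_bc-} by solving the characteristic ODE along geodesics backward from the exit point. The only cosmetic difference is that you sketch the tangential-divergence computation on $\Omega_x M$ producing $\Xi_n$ directly, where the paper delegates exactly this step to \cite[Prop.~3.1]{vierus}.
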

\begin{proof}
Multiplying \eqref{eq:transport-dynamic} by a test function $w\in H^1(\Omega M)$ and integrating over $[0, T]\times \Omega M$ lead to
\begin{align}\label{weak_adj_eq}
\int_{0}^{T}\int_{\Omega M}\frac{\partial u}{\partial t}w+\langle\nabla_x u,\xi\rangle w-\Gamma_{ij}^{k}\xi^i\xi^j\frac{\partial u}{\partial \xi^k}+\alpha uw\ \mathrm{d}\Sigma\mathrm{d}t= \int_{0}^{T}\int_{\Omega M}f_{i_1,\dots,i_m}(t,x)\xi^{i_1}\cdots\xi^{i_m}w\mathrm{d}\Sigma\mathrm{d}t.    
 \end{align}
Here, $\mathrm{d}\Sigma := \mathrm{d}x \wedge\mathrm{d}\xi$ denotes the volume measure on $\Omega M$. We furthermore compute
\begin{align}\label{adj1}
\int_{0}^{T}\frac{\partial u}{\partial t}w\mathrm{d}t =u(T)w(T)-\int_{0}^{T}u\frac{\partial w}{\partial t}\mathrm{d}t
 \end{align}
and
\begin{align}\label{adj2}
\int_{\Omega M}\langle\nabla_x u,\xi\rangle w\mathrm{d}\Sigma = \int_{\partial_+ \Omega M}\phi w\langle\xi,\nu_x\rangle\mathrm{d}\sigma - \int_{\Omega M}\langle\nabla_x w,\xi\rangle u\mathrm{d}\Sigma.
 \end{align}
An application of \cite[Prop. 3.1]{vierus} furthermore yields
\begin{align}\label{adj3}
-\int_{\Omega_x M}\Gamma_{ij}^{k}\xi^i\xi^j\frac{\partial u}{\partial \xi^k}w\mathrm{d}\Sigma&=\int_{\Omega_x M}\Gamma_{ij}^{k}\xi^i\xi^j\frac{\partial w}{\partial \xi^k}u + \Xi_n(x,\xi) uw\ \mathrm{d}\Sigma.
\end{align}
Inserting \eqref{adj1}-\eqref{adj3} into \eqref{weak_adj_eq} implies
\begin{align}\label{adj_weak1}
&\int_{0}^{T}\int_{\Omega M}u\Big( -\frac{\partial w}{\partial t}  -\langle\nabla_x w,\xi\rangle +\Gamma_{ij}^{k}\xi^i\xi^j\frac{\partial w}{\partial \xi^k}+(\alpha + \Xi_n(x,\xi)) w\Big)\mathrm{d}\Sigma\\\label{adj_weak2}
&+ \int_{0}^{T}\int_{\partial_{+}\Omega M}\phi\Big(w\langle\nu_x,\xi\rangle\Big)\mathrm{d}\sigma\mathrm{d}t\\\label{adj_weak3}
&+\int_{\Omega M} u(T,x,\xi)w(T,x,\xi)\mathrm{d}\Sigma\\
&= \int_{0}^{T}\int_{\Omega M}\langle f, w\xi^m\rangle\mathrm{d}\Sigma\mathrm{d}t\\
&=\langle f, (\mathcal{S}_\alpha^d)^* h\rangle_{L^2(0,T; L^2(S^m \tau'_M))}.
 \end{align}
Since $w$ solves \eqref{adj_pde}, \eqref{adj_ic}, the integrals \eqref{adj_weak1} and \eqref{adj_weak3} vanish and \eqref{adj_weak2} becomes 
\begin{align*}
\int_{0}^{T}\int_{\Omega M} \phi h\mathrm{d}\Sigma = \langle \mathcal{S}_\alpha^d f, h\rangle_{L^2(0,T; L^2(\Omega M))}.
 \end{align*}
Since $u=0$ on $\partial_{-}\Omega M$, the values of $w$ on  $\partial_{-}\Omega M$ are unknown and have to be determined. To this end we restrict $w$ to a geodesic curve $\gamma_{x,\xi}$ for some $(x,\xi)\in \Omega M$. Then, we get for $\tilde{w}(\tau)\coloneqq w(t+\tau,\gamma_{x,\xi}(\tau),\dot{\gamma}_{x,\xi}(\tau))$ that
\begin{align}\label{adj_ode}
\frac{\mathrm{d}\tilde{w}(\tau)}{\mathrm{d}\tau} = \frac{\partial w}{\partial t}(t+\tau,\gamma_{x,\xi}(\tau),\dot{\gamma}_{x,\xi}(\tau)) + \mathcal{H}\tilde{w}(\tau) = (\alpha+\Xi_n)\tilde{w}(\tau)
 \end{align}
and
\begin{align}\label{adj_ode_ic}
\tilde{w}(\tau_{+}(x,\xi))=\frac{h(t+\tau_{+}(x,\xi),\gamma_{x,\xi}(\tau_{+}(x,\xi)),\dot{\gamma}_{x,\xi}(\tau_{+}(x,\xi)))}{\langle \nu_{\gamma_{x,\xi}(\tau_{+}(x,\xi))},\dot{\gamma}_{x,\xi}(\tau_{+}(x,\xi))\rangle}\eqqcolon k_h(t,x,\xi).
 \end{align}
The first order ODE \eqref{adj_ode} for given terminating condition \eqref{adj_ode_ic} can be solved by separation of variables, and we obtain
\begin{align*}
\tilde{w}(\tau)=k_h(t,x,\xi)\exp\left(-\int_{\tau}^{\tau_{+}(x,\xi)}(\alpha+\Xi_n)(\gamma_{x,\xi}(\tilde{\tau}),\dot{\gamma}_{x,\xi}(\tilde{\tau}))\mathrm{d}\tilde{\tau}\right).
 \end{align*}
Thus,
\begin{align}\label{solution_char}
\tilde{w}(0)=w(t,x,\xi) = k_h(t,x,\xi)\exp\left(-\int_{0}^{\tau_{+}(x,\xi)}(\alpha+\Xi_n)(\gamma_{x,\xi}(\tilde{\tau}),\dot{\gamma}_{x,\xi}(\tilde{\tau}))\mathrm{d}\tilde{\tau}\right)
 \end{align}
on $\Omega M$ and, in particular, on $\partial_{-}\Omega M$. This completes the proof.
\end{proof}

\begin{corollary}\label{C:adjoint_static}
In the case of static tensor fields, i.e., $f\in L^2(S^m \tau_M ')$ and $\phi =\mathcal{S}_\alpha(f)$, the adjoint problem is to find the unique solution $w\in H^1(\Omega M)$ of 
\begin{align}
-\langle\nabla w,\xi\rangle+\Gamma_{ij}^{k}\xi^i\xi^j\frac{\partial w}{\partial \xi^k}+\left(\alpha+n^{-1}(x)\langle\nabla n(x),\xi\rangle\right) w &= 0,\qquad &(x,\xi)\in \Omega M\label{adj_pde_stat}
 \end{align}
with boundary conditions
\begin{align}
w(x,\xi)=k_h(x,\xi)\exp\left(-\int_{0}^{\tau_{+}(x,\xi)}\Big(\alpha+\Xi_n\Big)(\gamma_{x,\xi}(\sigma),\dot{\gamma}_{x,\xi}(\sigma))\mathrm{d}\sigma\right), &\qquad(x,\xi)\in \partial\Omega M\label{adj_bc_stat-}
 \end{align}
where
\begin{align}\label{denominator}
    k_h(x,\xi)=\frac{h(\gamma_{x,\xi}(\tau_{+}(x,\xi)),\dot{\gamma}_{x,\xi}(\tau_{+}(x,\xi)))}{\langle \nu_{\gamma_{x,\xi}(\tau_{+}(x,\xi))}, \dot{\gamma}_{x,\xi}(\tau_{+}(x,\xi))\rangle}
 \end{align}
for any $h\in L^2(\partial\Omega M)$. The adjoint operator is then given by 
\begin{align}\label{S_hx}
    \mathcal{S}_\alpha^* h(x)=\int_{\Omega_x M} w(x,\xi)\xi^m\mathrm{d}\sigma_x(\xi) \in L^2(S^m \tau_M ').
 \end{align}
\end{corollary}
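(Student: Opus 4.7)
The plan is to treat the corollary as the time-independent specialisation of Theorem \ref{T:adjoint_dynamic}: one can either derive it ``from scratch'' by repeating the adjoint computation without the $\partial_t$ terms, or formally plug time-independent $f$ and $h$ into the dynamic statement. I would present it as a self-contained computation because the resulting formulas are cleaner and the boundary condition (\ref{adj_bc_stat-}) involves only an integration along a single geodesic (no translation in $t$). The strategy mirrors Theorem \ref{T:adjoint_dynamic} almost step by step, just dropping the time integration and the terminal condition.

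The concrete steps. First, I would multiply the static transport equation
\begin{align*}
\langle\nabla_x u,\xi\rangle - \Gamma_{ij}^{k}\xi^i\xi^j\frac{\partial u}{\partial \xi^k} + \alpha u = \langle f,\xi^m\rangle
\end{align*}
by a test function $w\in H^1(\Omega M)$ and integrate over $\Omega M$. Second, integration by parts in $x$ produces $-\int_{\Omega M}\langle \nabla_x w,\xi\rangle u\,\mathrm{d}\Sigma$ together with the boundary term $\int_{\partial\Omega M}uw\langle\xi,\nu_x\rangle\,\mathrm{d}\sigma$, which reduces to $\int_{\partial_+\Omega M}\phi\,w\langle\xi,\nu_x\rangle\,\mathrm{d}\sigma$ because $u=0$ on $\partial_-\Omega M$ by the transport boundary condition. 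Third, the angular derivative is handled by invoking \cite[Prop.~3.1]{vierus} exactly as in (\ref{adj3}), producing the correction $\Xi_n(x,\xi)uw$; since we are in the $d=3$ setting used throughout the numerical part, $\Xi_n(x,\xi)=n^{-1}(x)\langle\nabla n(x),\xi\rangle$, which is precisely the zero-order coefficient in (\ref{adj_pde_stat}).

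Fourth, I would then require $w$ to solve (\ref{adj_pde_stat}) so that the bulk integral vanishes identically, leaving
\begin{align*}
\langle f,\mathcal{S}_\alpha^{*} h\rangle_{L^2(S^m\tau_M')} = \int_{\partial_+\Omega M}\phi(x,\xi)\,w(x,\xi)\langle\xi,\nu_x\rangle\,\mathrm{d}\sigma(x,\xi).
\end{align*}
Matching this with $\langle\mathcal{S}_\alpha f,h\rangle_{L^2(\partial_+\Omega M)}$ forces $w\langle\xi,\nu_x\rangle = h$ on $\partial_+\Omega M$, i.e.\ $w|_{\partial_+\Omega M}=k_h$ with $k_h$ as in (\ref{denominator}). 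Since the adjoint PDE is posed on all of $\Omega M$, the missing boundary data on $\partial_-\Omega M$ must be supplied. For this I would restrict $w$ to an arbitrary geodesic $\gamma_{x,\xi}$ and observe that the PDE collapses to the linear ODE $\mathrm{d}\tilde w/\mathrm{d}\tau=(\alpha+\Xi_n)\tilde w$ with terminal value $\tilde w(\tau_+(x,\xi))=k_h(x,\xi)$, whose solution by separation of variables evaluated at $\tau=0$ yields exactly (\ref{adj_bc_stat-}) on $\partial_-\Omega M$ (and, in fact, throughout $\Omega M$).

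Finally, the inner-product identity reads $\langle f,\mathcal{S}_\alpha^{*} h\rangle_{L^2(S^m\tau_M')} = \int_{M}\int_{\Omega_x M}\langle f(x),w(x,\xi)\xi^m\rangle\,\mathrm{d}\sigma_x(\xi)\,\mathrm{d}x$, from which formula (\ref{S_hx}) is read off by the fundamental lemma of the calculus of variations. The main conceptual issue is verifying that the prescribed $w$ really lies in $H^1(\Omega M)$ (so that the integration-by-parts is justified); this is precisely the existence assumption inherited from Theorem \ref{T:adjoint_dynamic}, and under the smoothness of $\alpha$, $n$ and the boundary $\partial M$ it holds via the same viscosity regularisation used for $\mathcal{S}_\alpha$ itself. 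No other step is harder than the corresponding one in the dynamic case; in fact the absence of $\partial_t$ removes the only delicate transport along the $t$-direction.
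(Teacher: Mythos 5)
Your proposal is correct and follows essentially the same route as the paper: the corollary is obtained exactly as the time-independent specialisation of Theorem \ref{T:adjoint_dynamic}, repeating the weak formulation, the integration by parts via \cite[Prop.~3.1]{vierus}, and the ODE along geodesics to supply the data on $\partial_-\Omega M$, with the $\partial_t$ terms and terminal condition simply dropped. One minor slip: the numerical experiments are carried out in $d=2$ (where $\Xi_n$ carries a factor $\tfrac12$), not $d=3$, so your justification of the zero-order coefficient in \eqref{adj_pde_stat} by appeal to ``the $d=3$ setting used throughout the numerical part'' is inaccurate, though this does not affect the structure or validity of the argument.
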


\begin{remark}
a) Note that $w$ is well-definied on the boundary since $h(x,\xi)=0$ for $\langle \nu_x,\xi\rangle = 0$. We note that \eqref{S_hx} coincides with \eqref{adj_If} for $\alpha=0$.\\
b) Approximate solutions of the initial and boundary value problems in Theorem \ref{T:adjoint_dynamic} and Corollary \ref{C:adjoint_static} can be obtained by using a viscosity approach, i.e., adding $-\varepsilon \Delta w$ turning the PDEs to a parabolic, respectively, elliptic problem (cf. \cite{vierus}).
\end{remark}

An alternative representation of the adjoint operator exists, which does not rely on the transport equation. We observe, that by \eqref{solution_char} we can explicitly specify $w$ along the entire geodesic curve $\gamma$ using the integral representation. For the dynamic case this way we derive the following alternative representation.

\begin{corollary}
For a given function $h\in L^2(0,T;L^2(\partial_{+}\Omega M))$ we define the adjoint operator
\begin{align*}
(\mathcal{I}_\alpha^d)^{*}\colon L^2(0,T;L^2(\partial_{+}\Omega M))\rightarrow L^2(0,T; L^2(S^m \tau'_M))
\end{align*}   
of the generalized dynamic attenuated ray transform $\mathcal{I}_\alpha^d$ by
\begin{align}\label{eq:adj_dynamic_int}
[(\mathcal{I}_\alpha^{d})^* h](t,x) = \int_{\Omega_x M}w(t,x,\xi)\xi^m\mathrm{d}\sigma_x(\xi),
 \end{align}
where 
\begin{align}\label{eq:w_dyn}
w(t,x,\xi)=k_h(t,x,\xi)\exp\Big(-\int_{0}^{\tau_{+}(x,\xi)}\Big(\alpha+\Xi_n\Big)(\gamma_{x,\xi}(\tilde{\tau}),\dot{\gamma}_{x,\xi}(\tilde{\tau}))\mathrm{d}\tilde{\tau}\Big),
\quad t\in [0,T],\ (x,\xi)\in \Omega M.
 \end{align}
\end{corollary}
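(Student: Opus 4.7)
My plan is to derive~\eqref{eq:adj_dynamic_int} as a direct consequence of Theorem~\ref{T:adjoint_dynamic} by combining two observations: first, that $\mathcal{I}_\alpha^d$ coincides with the boundary-trace operator $\mathcal{S}_\alpha^d$ of the dynamic transport equation, and second, that the solution $w$ of the adjoint PDE appearing in Theorem~\ref{T:adjoint_dynamic} is already available in closed form, since it reduces along any geodesic to a first-order linear ODE that can be solved by separation of variables.

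In the first step I would argue that the function $u$ written down explicitly after~\eqref{eq:transport-dynamic_3} satisfies the transport BVP~\eqref{eq:transport-dynamic}--\eqref{eq:transport-dynamic_3} and, by construction, has trace equal to $\mathcal{I}_\alpha^d f$ on $\partial_+\Omega M$. By the uniqueness of the (viscosity limit) solution defining $\mathcal{S}_\alpha^d$, these two operators coincide on $L^2(0,T;L^2(S^m\tau_M'))$, so $(\mathcal{I}_\alpha^d)^* = (\mathcal{S}_\alpha^d)^*$. In the second step I would note that~\eqref{eq:w_dyn} is exactly the expression~\eqref{solution_char} obtained in the proof of Theorem~\ref{T:adjoint_dynamic}: restricting the adjoint PDE~\eqref{adj_pde} along any geodesic reduces it to the ODE~\eqref{adj_ode} with terminal datum~\eqref{adj_ode_ic}, which integrates uniquely to~\eqref{eq:w_dyn} at $\tau=0$ and thereby pins down the $H^1$-solution $w$ on all of $\Omega M$. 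Substituting~\eqref{eq:w_dyn} into $(\mathcal{S}_\alpha^d)^* h(t,x)=\int_{\Omega_x M} w(t,x,\xi)\,\xi^m\,\mathrm{d}\sigma_x(\xi)$ then produces~\eqref{eq:adj_dynamic_int}.

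The step I expect to be most delicate is verifying that the explicit formula~\eqref{eq:w_dyn} actually has the regularity $H^1(0,T;H^1(\Omega M))$ required to invoke Theorem~\ref{T:adjoint_dynamic}, and that the viscosity perturbation $-\varepsilon\Delta w$ does not spoil the identification in the limit $\varepsilon\to 0$. Smoothness of $\tau_\pm$ on $\partial\Omega M$, of the geodesic flow $(x,\xi,\tau)\mapsto(\gamma_{x,\xi}(\tau),\dot\gamma_{x,\xi}(\tau))$, and the integrability of $\alpha+\Xi_n$ along geodesics together with the strict convexity of $\partial M$ should suffice, in the spirit of the estimates in~\cite{vierus}. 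If this regularity argument turns out to be awkward, I would instead verify the adjoint identity $\langle \mathcal{I}_\alpha^d f,h\rangle = \langle f,(\mathcal{I}_\alpha^d)^* h\rangle$ directly by a Santaló-type change of variables on $\partial_+\Omega M\times[\tau_-(x,\xi),0]$, combined with the time shift $t\mapsto t+\tau_+(y,\eta)$ that transports $h$ back to the exit boundary; the extra factor $\Xi_n$ in the exponent then emerges precisely from an integration by parts in the $\xi$-direction, exactly as was already computed in~\eqref{adj3}.
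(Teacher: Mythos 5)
Your proposal follows essentially the same route as the paper: the paper derives this corollary precisely by observing that the solution $w$ of the adjoint problem in Theorem~\ref{T:adjoint_dynamic} is given in closed form by~\eqref{solution_char} along every geodesic, and then substituting that explicit expression into the backprojection integral $(\mathcal{S}_\alpha^d)^* h(t,x)=\int_{\Omega_x M} w(t,x,\xi)\,\xi^m\,\mathrm{d}\sigma_x(\xi)$, using that $\mathcal{S}_\alpha^d$ and $\mathcal{I}_\alpha^d$ agree as operators. Your additional remarks on the regularity of~\eqref{eq:w_dyn} and the viscosity limit go beyond what the paper records, but they do not change the argument.
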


\begin{remark}
We emphasize that, from a computational point of view, there is a deciding difference between the representations $(\mathcal{S}_\alpha^d)^{*}$ and $(\mathcal{I}_\alpha^d)^{*}$, though both involve the same backprojection operator over $\Omega_x M$. After solving the initial boundary value problem \eqref{eq:transport-dynamic}-\eqref{eq:transport-dynamic_3}, the integrand $w(t,x,\xi)$ in the evaluation of $(\mathcal{S}_\alpha^d)^{*}h$ is known for \emph{all} $(x,\xi)\in \Omega M$, whereas for the evaluation of $(\mathcal{I}_\alpha^d)^{*}h$ the geodesic equation \eqref{geodesic_eq} has to be solved to compute $\gamma_{x,\xi}$ for each $(x,\xi)\in \Omega M$ separetely, e.g., by the method of characteristics.
\end{remark}


\section{Numerical experiments}


We perform numerical tests for static vector fields in different settings. In all experiments we set $M:=B_1(0)$ the closed unit disc in $\mathbb{R}^2$, $m=1$, i.e. we consider vector fields, and the attenuation coefficient $\alpha \equiv \alpha_0 \geq 0$ to be constant. In Section \ref{Sec:Euclid} we consider the Euclidean geometry ($n\equiv 1$), whereas Section \ref{Sec:non_Euclid} is concerned with variable $n(x)$.

All subsequent computations were implemented in Matlab. We utilized an Intel(R) Core(TM) processor operating at 3.7 GHz with 64 GB of RAM. In cases, where we parallelized the code, we utilized ten cores.


\subsection{Numerical results for the Euclidean case}
\label{Sec:Euclid}

We evaluate the accuracy of the two representations of the adjoint operator $(\mathcal{I}_{\alpha_0})^*$ and $(\mathcal{S}_{\alpha_0})^*$, for synthetic data and $n\equiv 1$. Subsequently, we test different regularization methods to reconstruct vector fields. Once we find the optimal parameters of the grid and the best type of regularization, we extend the examples to metrics associated with variable $n(x)$.

\subsubsection{Numerical approximation of $\mathcal{I}_{\alpha_0} f$}

First, we compute the line integrals of $\mathcal{I}_{\alpha_0} f$. Because of the radial symmetry of $M$, we choose polar coordinates for all variables. Since we only need to consider points $x\in \partial M$, it is sufficient to parameterize as
\begin{align*}
x_p = \begin{pmatrix}
\cos\mu_p\\\sin\mu_p
\end{pmatrix},\qquad \mu_p = \frac{2\pi p}{P},\qquad p=1,\dots,P.
 \end{align*}
For $\xi$ we write accordingly
\begin{align*}
\xi_q = \begin{pmatrix}
\cos\varphi_q\\\sin\varphi_q
\end{pmatrix},\qquad \varphi_q = \frac{2\pi q}{Q},\qquad q=1,\dots,Q.
 \end{align*}
For $(x,\xi)\in\partial_{+}\Omega M$, the integral limit $\tau_{-}(x,\xi)$ of \eqref{Iaf} can be computed explicitly; we have $\tau_{-}(x,\xi) = -2\langle x,\xi\rangle$. Hence, $\mathcal{I}_{\alpha_{0}}$ becomes
\begin{align*}
[\mathcal{I}_{\alpha_{0}}f](x,\xi) = \int_{-2\langle x,\xi\rangle}^{0}\langle f(x+\tau\xi),\xi\rangle \exp(-2\alpha_{0}\langle x,\xi\rangle)\mathrm{d}\tau.
 \end{align*}
To approximate this integral numerically, we use the trapezoidal sum. To this end we need evaluations of $f$ at points $\tilde{x}$ on the line of integration. We distinguish two cases: The first one is that the point $\tilde{x}$ is located between two concentric grid lines, see Figure \ref{inter_out}. 

\begin{figure}[!h]
\begin{center}
\includegraphics[scale=0.20]{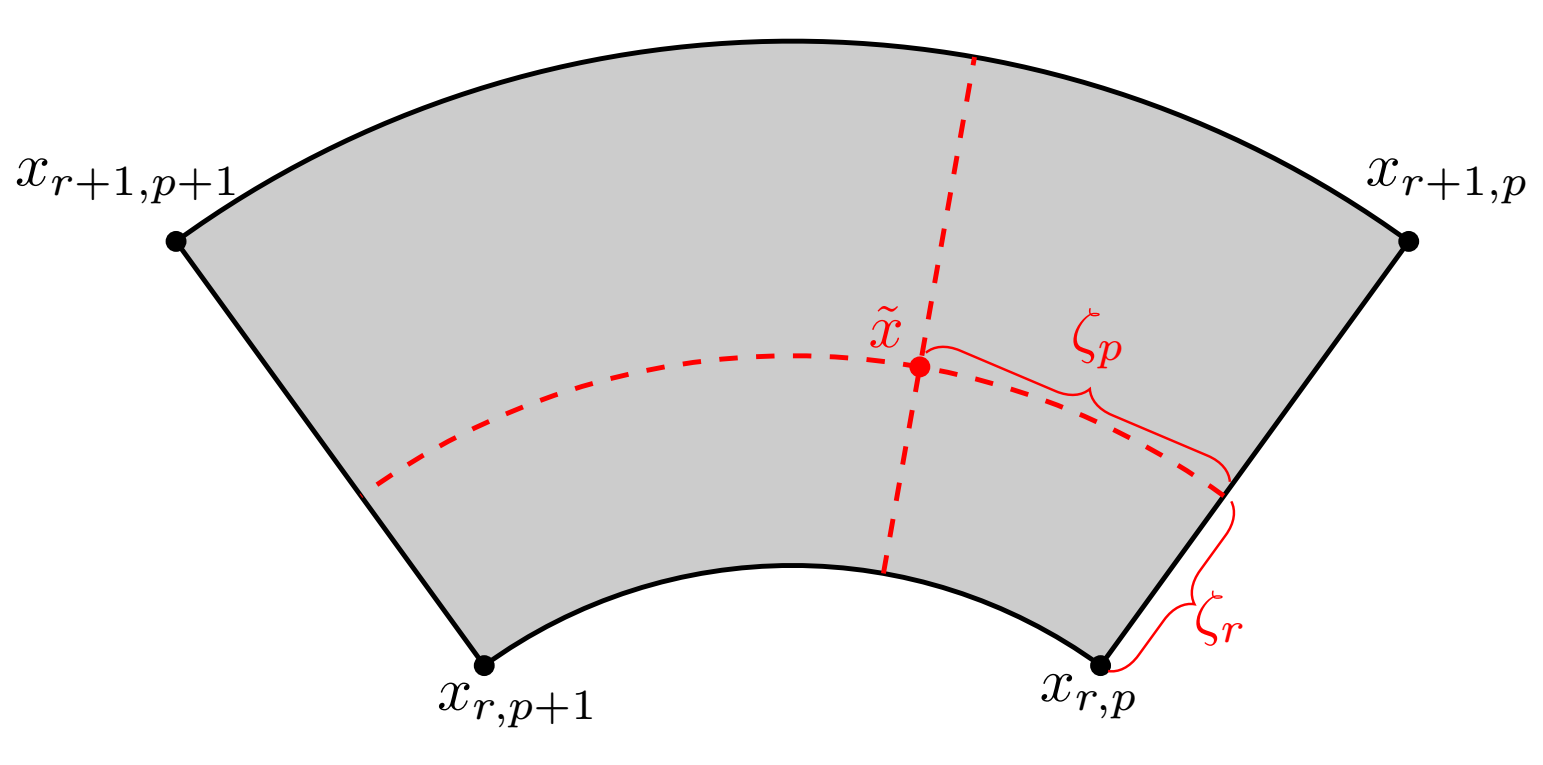}
\caption{Segment between two concentrical circles}
\label{inter_out}
\end{center}
\end{figure}

Without loss of generality $\tilde{x}$ is in the mesh with grid points $x_{r,p}$, $x_{r+1,p}$, $x_{r,p+1}$ and $x_{r+1,p+1}$, see Figure \ref{inter_out}. Let $0\leq\zeta_r \leq 1$ be the radial difference to the inner circle and $0\leq\zeta_p\leq 1$ the angular difference. Then, using bilinear interpolation, we get
\begin{align*}
f(\tilde{x})&\approx \zeta_r\zeta_p f(x_{r,p}) + (1-\zeta_r)\zeta_p f(x_{r+1,p}) \\
&+ \zeta_r(1-\zeta_p) f(x_{r,p+1}) + (1-\zeta_r)(1-\zeta_p) f(x_{r+1,p+1}).
\end{align*}
The second case is, that $\tilde{x}$ lies inside the inner circle. Since $0$ is not contained in the grid, we assign it a value by taking the average of all grid points on the inner circle, i.e.
\begin{align*}
f(0)\coloneqq\frac{1}{P}\sum_{p=1}^{P}f(x_{1,p}).
\end{align*}
For any other $\tilde{x}\neq 0$ there is a segment containing $\tilde{x}$, which is given by two grid points $x_{1,p}$ and $x_{1,p+1}$ and $0$, see Figure \ref{inter_in}. 

\begin{figure}[!h]
\begin{center}
\includegraphics[scale=0.20]{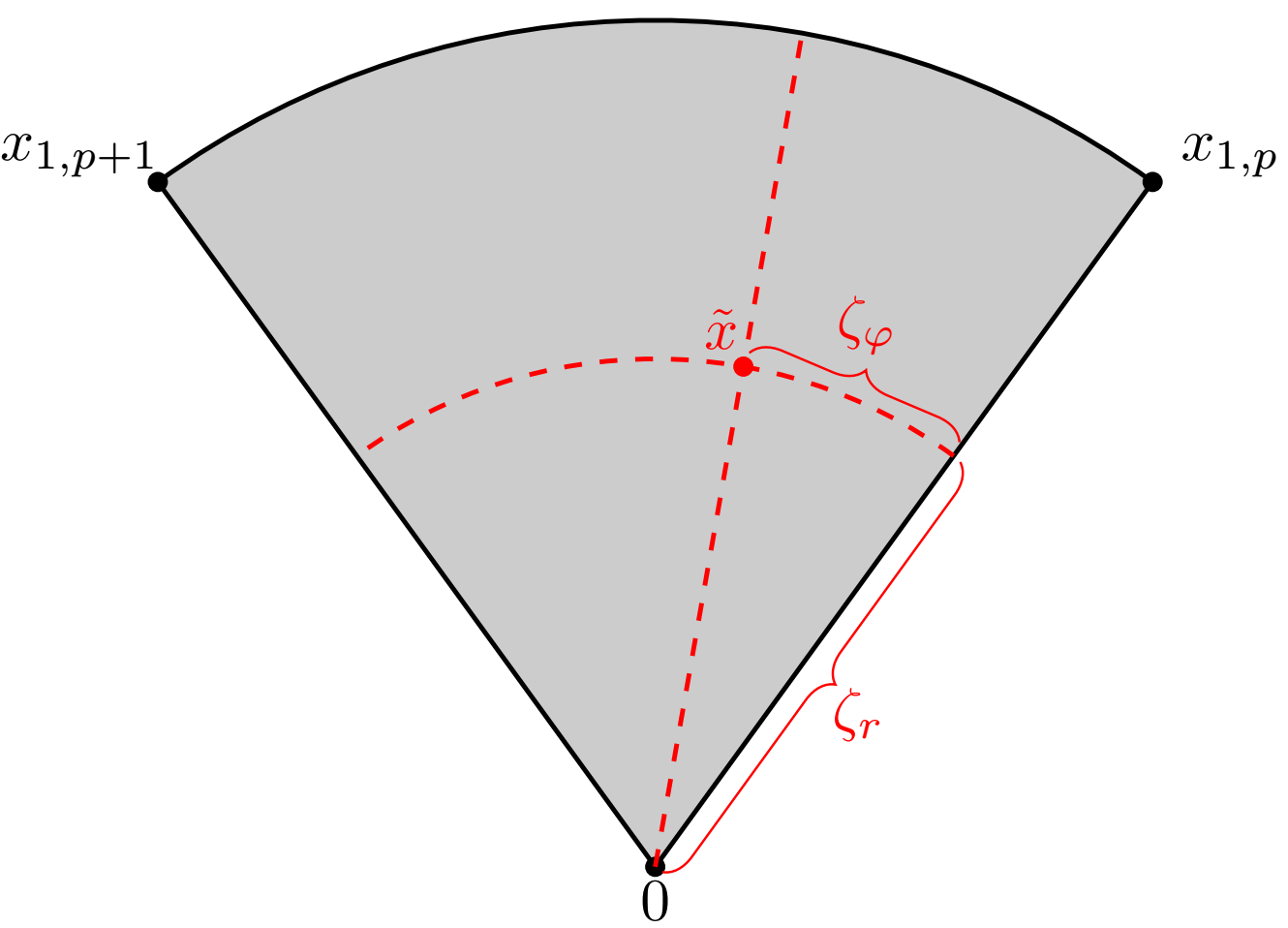}
\caption{Segment inside the inner circle}
\label{inter_in}
\end{center}
\end{figure}

We define $\zeta_r$ and $\zeta_p$ analogously and approximate
\begin{align*}
f(\tilde{x}) \approx \zeta_r f(0) + (1-\zeta_r)(\zeta_p f(x_{1,P})+(1-\zeta_p)f(x_{1,p+1})).
 \end{align*}
Splitting $[\tau_{-}(x,\xi),0]$ into $T$ intervals of length $\Delta\tau = \frac
{-\tau_{-}(x,\xi)}{T}$, we get the step sizes $\tau_t = \tau_{-}(x,\xi)+t\Delta\tau,\ t = 0,\dots,T$, and set
\begin{align*}
H_{p,q} \coloneqq [\mathcal{I}_{\alpha_{0}}f](x_p,\xi_q) = \frac{1}{T+1}\sum_{t=0}^{T}\langle f(x_p+\tau_{t}\xi_q),\xi_q\rangle \exp(\alpha_{0}\tau_t).
 \end{align*}

\subsubsection{Numerical computation of $\mathcal{I}_{\alpha_0}^* h$}

In our setting $\mathcal{I}_{\alpha_0}^* h$ for $h\in L^2(\partial_{+}\Omega M)$ computes as
\begin{align*}
[\mathcal{I}_{\alpha_0}^*h](x) = \int_{0}^{2\pi}w(x,\xi(\varphi))\xi(\varphi)\mathrm{d}\varphi
\end{align*}
where 
\begin{align}
w(x,\xi)&=\frac{h(x+\tau_{+}(x,\xi)\xi,\xi)}{\langle x,\xi\rangle + \tau_{+}(x,\xi)}\exp(-\alpha_{0}\tau_{+}(x,\xi)).\label{w_euk}
\end{align}
For $(x,\xi)\in \Omega M$ we can compute $\tau_{+}(x,\xi)$ explicitly by
\begin{align*}
\tau_{+}(x,\xi)&= -\langle x,\xi\rangle +\sqrt{\langle x,\xi\rangle^2+1-\langle x,x\rangle}.
 \end{align*}
Hence, only the evaluation of $h$ requires interpolation. It is possible to precisely calculate the location where $h$ needs to be evaluated. We parameterize $x_{r,p}\in M$ by
\begin{align*}
x_{r,p} = \rho_r\begin{pmatrix}
\cos\mu_p\\\sin\mu_p
\end{pmatrix},\qquad \rho_r = \frac{r}{R},\qquad r=1,\dots,R
 \end{align*}
and set
\begin{align*}
\tilde{x}_{r,p,q}\coloneqq x_{r,p} +\tau_{+}(x_{r,p},\xi_q)\xi_q\in \partial M
 \end{align*}
for the endpoint of the line of integration. Then, there exists a unique $\tilde{\mu}\in [0,2\pi)$, such that
\begin{align}\label{interpol_euk_x+}
\tilde{x}_{r,p,q}=\begin{pmatrix}
\cos\tilde{\mu}\\\sin\tilde{\mu}
\end{pmatrix}.
\end{align}
Using linear interpolation we approximate
\begin{align*}
h(\tilde{x}_{r,p,q},\xi_q)\approx \left(1-\left(\frac{\tilde{\mu} P}{2\pi} - \left\lfloor \frac{\tilde{\mu} P}{2\pi} \right\rfloor\right)\right) H_{\left\lfloor \frac{\tilde{\mu} P}{2\pi} \right\rfloor,q} + \left(  \frac{\tilde{\mu} P}{2\pi} -\left\lfloor \frac{\tilde{\mu} P}{2\pi} \right\rfloor     \right)H_{\left\lfloor\frac{\tilde{\mu} P}{2\pi}\right\rfloor+1,q}.
 \end{align*}
where the indices are to be considered modulo $P$. The expression $\mathcal{I}_{\alpha_0}^* h$ is then calculated using the trapezoidal sum,
\[
[\mathcal{I}_{\alpha_0}^* h](x) \approx \frac{2\pi}{Q}
\sum_{q=1}^Q w(x,\xi (\varphi_q)) \xi(\varphi_q).
\]

Finally, as reconstruction method we choose the damped Landweber method to iteratively approximate $f$ by
\begin{equation}\label{Landweber}
    f_{k+1} = f_k -\omega \mathcal{I}_{\alpha_0}^* 
    (\mathcal{I}_{\alpha_0}f_k-g^\delta),\qquad k=0,1,\ldots
\end{equation}
with a relaxation parameter $\omega > 0$.

We briefly discuss the ratio between the number of angles $P$ and radii $R$. In Cartesian grids equidistant mesh sizes are usual, since this way the area of the grid meshes is minimized subject to a constant perimeter. Here, we want to proceed similarly: We select the grid sizes in such a way that the straight sides for a grid segment, which are identical for all segments, are approximately as long as the curved sections. We therefore determine the average length of a curved section. The concentric circles in the grid have radii $r_i = \frac{i}{R}, \ i=0,\dots, R$ yielding an average radius $\bar{r}$ of
\[  \bar{r} =\frac{1}{R+1}\sum_{i=0}^R r_i = \frac{1}{2}.\]
Thus, for a grid with $P$ angles, the meshes have an average arc length of $\frac{\pi}{P}$. This should correspond to the lengths of the straight sides of a mesh, i.e. $\frac{\pi}{P} =\frac{1}{R}$ and thus
\begin{align}\label{PpiR}
P\approx\pi R.
\end{align}
We use this as an orientation for our numerical experiments and validate that this ratio, indeed, is reasonable.
Setting $R\cdot P=3600$ fixed, we consider different combinations of $R$ and $P=Q$ and compare the relative errors. As relaxation parameter we use $\omega=0.1$, the initial guess in all experiments is $f_0=0$ and we stop iterating if 
$$\frac{\|f_k - f\|_{L^2}}{\|f\|_{L^2}}< 10^{-5}.$$
Note, that in our experiments the exact solution $f$ is known. The closest pair $(R,P)$ satisfying \eqref{PpiR} with $R\cdot P=3600$ is $(R, P)= (34,106)$. The attenuation coefficients vary as $\alpha=\alpha_0\in\lbrace 0, 0.1,0.2,0.3,0.4\rbrace$.
Table \ref{tab1} shows the reconstruction results for the solenoidal vector field $f^{(1)}(x)=(x_1+x_2,x_1-x_2)^\top$, $(x_1,x_2)\in M$.\\

\begin{table}[H]
\centering
\begin{tabular}{c|ccccc}
\toprule
& & & \boldmath{$\alpha$} & & \\
\textbf{(R, P)} & \textbf{0} & \textbf{0.1} & \textbf{0.2} & \textbf{0.3} & \textbf{0.4} \\
\midrule
(20, 180) & 0.0579 & 0.1432 & 0.4580 & 0.9739 & 1.1155 \\
(30, 120) & 0.0637 & 0.1457 & 0.4599 & 0.9703 & 1.1156 \\
(34, 106) & 0.0233 & 0.1190 & 0.4403 & 0.9568 & 1.1094 \\
(40, 90)  & 0.0313 & 0.1209 & 0.4407 & 0.9567 & 1.1108 \\
(60, 60)  & 0.1190 & 0.1762 & 0.4672 & 0.9540 & 1.1371 \\
(90, 40)  & 1.6989 & 1.7108 & 1.7675 & 1.9603 & 2.2919 \\
(120, 30) & 0.3920 & 0.4228 & 0.6306 & 1.0871 & 1.3950 \\
(180, 20) & 2.9775 & 3.0166 & 3.0895 & 3.2521 & 3.5495 \\
\bottomrule
\end{tabular}
\caption{Relative $L^2$-error of the Landweber approximation  and exact solution $f^{(1)}(x)=(x_1+x_2,x_1-x_2)^\top$ for $R\cdot P=3600$}
\label{tab1}
\end{table}

We observe that, regardless of the choice of $\alpha$, the combination $(R, P) = (34, 106)$ consistently yields the smallest reconstruction errors. Therefore, this combination will be retained for all the remaining experiments. Subsequently, we investigate the optimal number of directions $Q$. We vary $Q$ with $R=34$ and $P=106$ kept constant. The results are illustrated in Table \ref{tab3}.\\

\begin{table}[H]
\centering
\begin{tabular}{c|ccccc}
\toprule
& & & \boldmath{$\alpha$} & & \\
\textbf{Q} & \textbf{0} & \textbf{0.1} & \textbf{0.2} & \textbf{0.3} & \textbf{0.4} \\
\midrule
10  & 0.2330 & 0.2639 & 0.5030 & 0.9684 & 1.1268 \\
20  & 1.1849 & 1.2326 & 1.3388 & 1.6129 & 2.1131 \\
30  & 0.1741 & 0.2146 & 0.4851 & 0.9668 & 1.1090 \\
40  & 0.5509 & 0.5682 & 0.7118 & 1.1090 & 1.5050 \\
50  & 0.1260 & 0.1801 & 0.4745 & 0.9580 & 1.1090 \\
60  & 0.3371 & 0.3585 & 0.5583 & 1.0103 & 1.1270 \\
70  & 0.1514 & 0.1950 & 0.4713 & 0.9671 & 1.1090 \\
80  & 0.1878 & 0.2229 & 0.4839 & 0.9720 & 1.1067 \\
90  & 0.0506 & 0.1268 & 0.4415 & 0.9548 & 1.1056 \\
100 & 0.1858 & 0.2202 & 0.4776 & 0.9714 & 1.1135 \\
106 & 0.0233 & 0.1190 & 0.4403 & 0.9568 & 1.1094 \\
\bottomrule
\end{tabular}
\caption{Relative $L^2$-error of the Landweber approximation and exact solution $f^{(1)}$ for $(R, P) = (34, 106)$ and varying $Q$.}
\label{tab3}
\end{table}
$ $\\[0ex]

In the light of Table \ref{tab3} we obtain the best reconstruction for $Q=106$ independently of $\alpha$. We furthermore observe that for $\alpha = 0.2$ the error for $Q=106$ is comparable to $Q=30$. As the number of arithmetic operations to compute $\mathcal{I}_{\alpha_0}$ and $\mathcal{I}_{\alpha_0}^*$ is proportional to $Q$ we set $P=Q$ for small absorptions $\alpha$ and significantly decrease $Q$ for $\alpha$ large.

 The next experiment reconstructs $f^{(2)}(x)=(x_1^2-2x_2^2,-2x_1x_2)^\top$ for noisy data, see Figure \ref{color}. It can be observed that even for large noise levels $\delta$ the reconstructions prove to be stable. Here, $\delta>0$ means the level of relative uniformly distributed noise, computed in the $L^2$-norm.

\begin{figure}[H]
\begin{center}
   \includegraphics[scale=0.38]{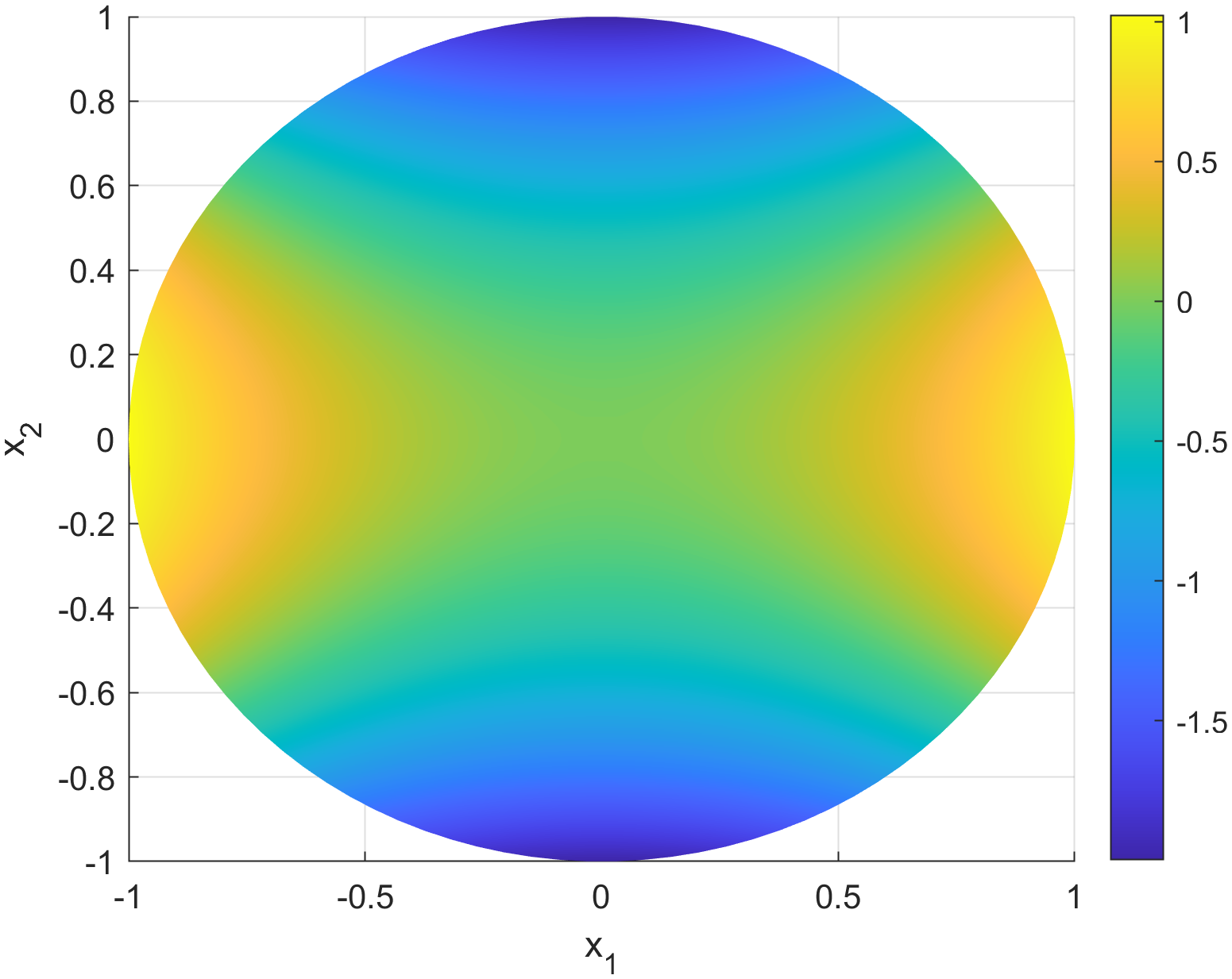}\qquad 
 \includegraphics[scale=0.38]{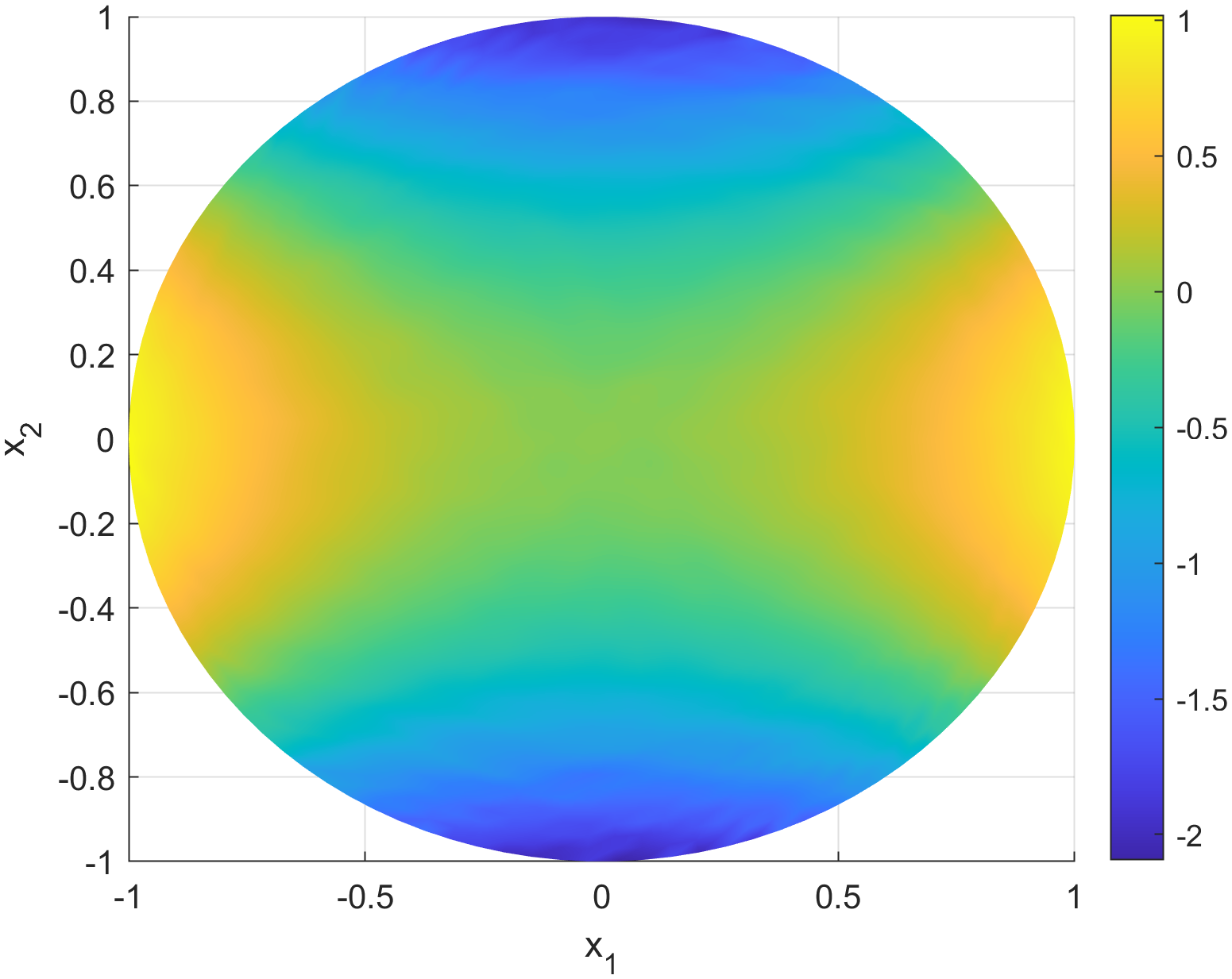}\qquad 
   \includegraphics[scale=0.38]{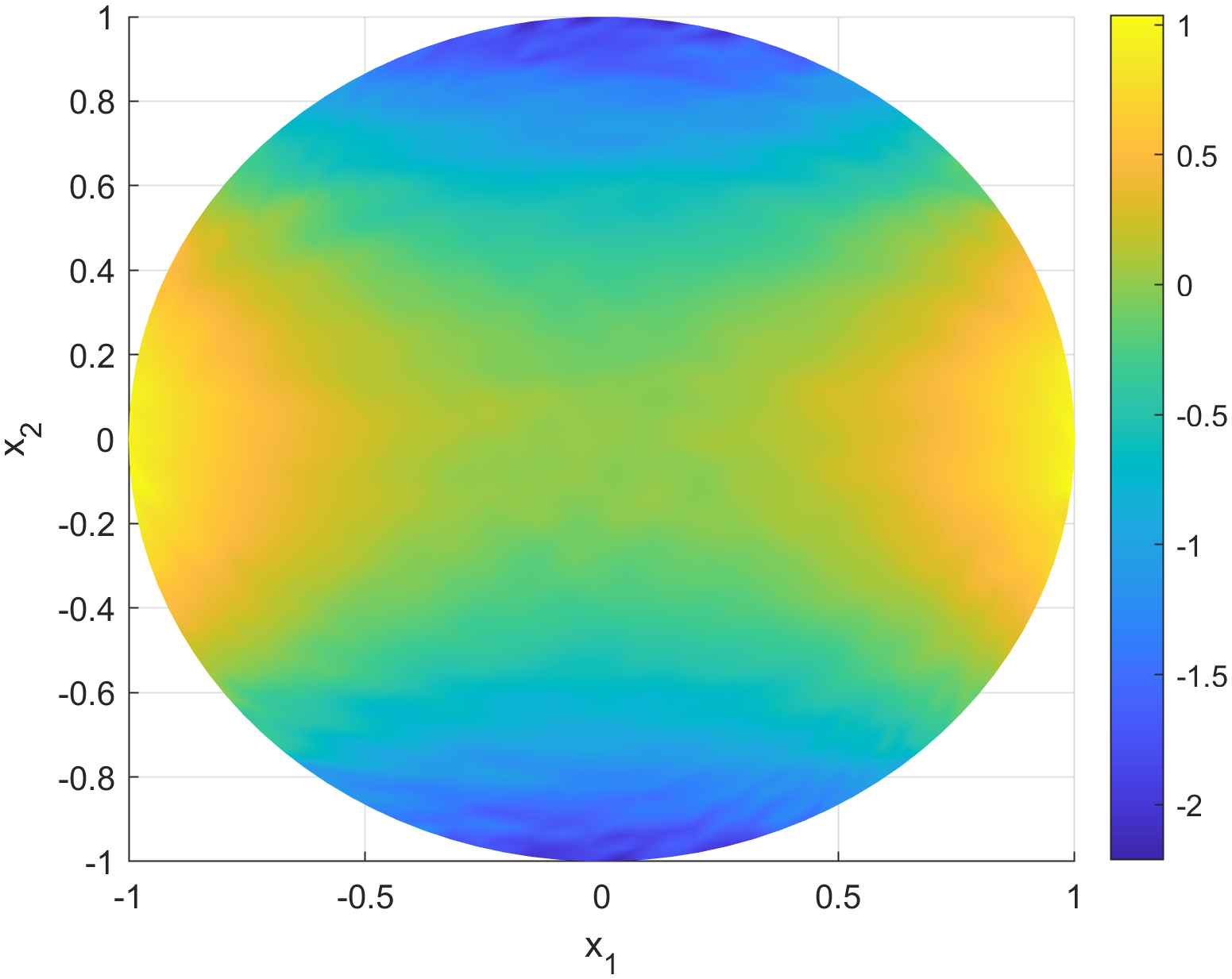}
  \caption{Reconstruction of $f_1^{(2)}$ for varying noise levels $\delta\in\lbrace 0,0.1,0.2\rbrace$} \label{color}
  \end{center}
\end{figure}


\subsubsection{Numerical computation of $\mathcal{S}_{\alpha_0}^* h$}


In this subsection, we examine the alternative approach of computing the adjoint operator by $\mathcal{S}_{\alpha}^*$ instead of $\mathcal{I}_\alpha^*$. 

We recall, that $\mathcal{S}_{\alpha_0}^*\colon  L^2(\partial_{+}\Omega M)\rightarrow L^2(S^1 \tau'_M)$ is given by
\begin{align*}
[\mathcal{S}_{\alpha_0}^*h](x) = \int_{\Omega_x M}w(x,\xi)\xi\mathrm{d}\sigma_x(\xi),
\nonumber\end{align*}
where $w$ solves the following boundary value problem:
\begin{align*}
&-\langle \nabla w(x,\xi),\xi\rangle+\alpha_0 w = 0,& &(x,\xi)\in\Omega M\\[2ex]
w &= \frac{h(x+\tau_{+}(x,\xi)\xi,\xi)}{\sqrt{\langle x,\xi\rangle^2 +1-\vert x\vert^2}},& &(x,\xi)\in \partial\Omega M.
\nonumber\end{align*}
To get a unique weak solution we follow the lines in \cite{vierus} and add $-\varepsilon\Delta w$ for some (small) $\varepsilon>0$ to the left-hand side of the PDE,
\begin{align*}
-\varepsilon\Delta w-\langle \nabla w(x,\xi),\xi\rangle+\alpha_0 w &= 0,& &(x,\xi)\in\Omega M.\end{align*}
We use again spherical coordinates to parameterize $x$ and $\xi$ and calculate the boundary values for $r=R$ analytically via $\mathcal{I}_{\alpha_0}$. For the inner points, we first formulate the PDE in spherical coordinates,
\begin{align}
 -\varepsilon\left(
\frac{1}{\rho}\frac{\partial w}{\partial \rho}+\frac{\partial^2 w}{\partial \rho^2} + \frac{1}{\rho^2}\frac{\partial^2 w}{\partial\mu^2}
\right)
-\frac{\partial w}{\partial \rho}\cos(\varphi-\mu)-\frac{1}{\rho}\frac{\partial w}{\partial\mu}\sin(\varphi-\mu)+\alpha_0 w =0.\label{visco_num}
\nonumber\end{align}
Note, that by a slight misuse of notation we write $w$ instead of $w_\varepsilon$ for the sake of readability. We use finite differences to approximate the partial derivatives. The radial derivative is approximated by forward differences, such that derivatives can also be assigned to points at the innermost ring. The second derivative is approximated using central differential quotients, for the azimuthal derivative we use central differences.
For $r=1$ the second radial derivative cannot be calculated this way. To this end, we assume that $P$ is even. Then, the grid point $x_{1,p+\frac{P}{2}}$ is located on the line that connects $x_{1,p}$ and $x_{2,p}$ at a distance of $2\Delta \rho$ to $x_{1,p}$, see Figure \ref{2nd_derivative}. We choose a linear combination of these three points to approximate the second derivative in $x_{1,p}$. 
A Taylor expansion leads to the approximation
\begin{align*}
\frac{\partial^2 w}{\partial \rho^2}(x_{1,p},\xi_q)
\approx \frac{2w_{2,p,q} -3 w_{1,p,q}+w_{1,p+\frac{P}{2},q}}{3(\Delta\rho)^2}.
\end{align*}
Note that there appears no derivative with respect to $\xi$, as it is constant. This means that the system of equations with $(R-1)PQ$ variables can be reduced to $Q$ systems of equations with a number of $(R-1)P$ variables. These can be calculated independently and, thus, in parallel. 

It remains to investigate the accuracy of the reconstruction if $\mathcal{S}_{\alpha}^*$ is used instead of $\mathcal{I}_{\alpha}^*$. Unfortunately, no rule for the optimal choice of the mesh sizes can be found here. Generally, the method only seems effective for certain grid parameters. Numerous numerical experiments using different combinations of $R$ and $P$ showed that the solution of the adjoint problem is so unstable that no Landweber iteration could be performed. For all grid settings, the matrix arising in each iteration of the finite differences appears to be nearly singular. To avoid this, we calculated viscosity solutions (by adding $-\varepsilon \Delta u$) leading to no improvement. So, we turned over to calculating the minimum norm solution instead. 
The performance is then measured by relative errors
\begin{align}
\textrm{err}(\mathcal{I}_\alpha^*) \coloneqq \frac{\vert \langle \mathcal{I}_\alpha f,\mathcal{I}_\alpha f\rangle - \langle f,\mathcal{I}_\alpha^*\mathcal{I}_\alpha f\rangle\vert}{\langle \mathcal{I}_\alpha f,\mathcal{I}_\alpha f\rangle}
,\qquad
\textrm{err}((\mathcal{S}_\alpha^{\varepsilon})^*) \coloneqq \frac{\vert \langle \mathcal{I}_\alpha f,\mathcal{I}_\alpha f\rangle - \langle f,(\mathcal{S}_\alpha^{\varepsilon})^*\mathcal{I}_\alpha f\rangle\vert}{\langle \mathcal{I}_\alpha f,\mathcal{I}_\alpha f\rangle}.
\end{align}
For various selections of $\varepsilon$, the errors are calculated, the results are depicted in Figure \ref{err_dual2} for $f^{(2)}$. As $\varepsilon$ increases significantly, the error increases, while as $\varepsilon \to 0$, it approaches the error associated with the unperturbed transport equation ($\varepsilon=0$). However, it is noteworthy that in any case the errors are larger than those when using $\mathcal{I}_0^*$. One reason is the fact that we compute the minimum norm solution instead of an exact one. 
 
\begin{figure}[!h]
\begin{center}
\includegraphics[scale=0.20]{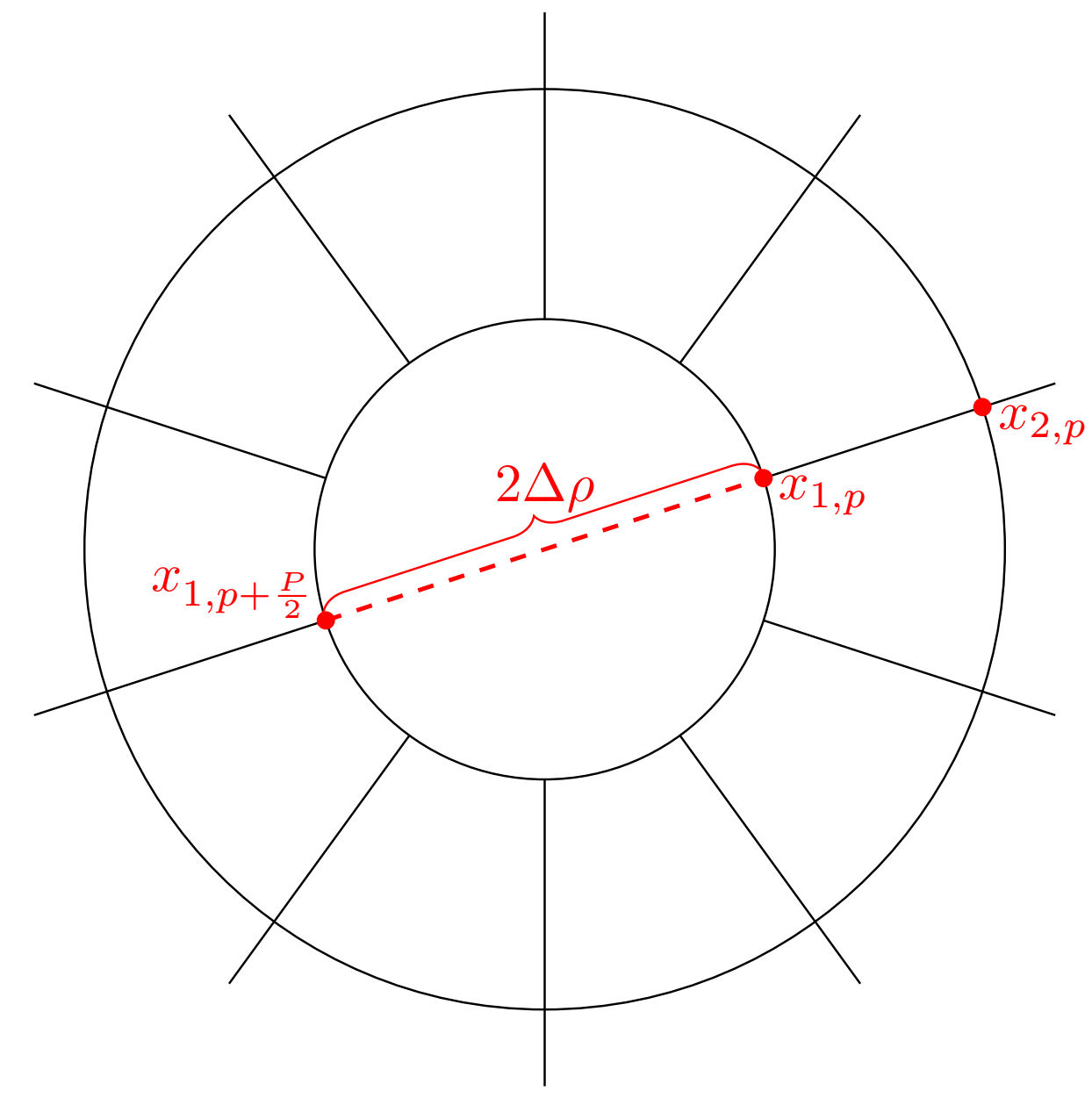}
\caption{Grid points for approximation of $\frac{\partial^2 w}{\partial\mu^2}$ at $p=1$}
\label{2nd_derivative}
\end{center}
\end{figure}
 
Another drawback becomes evident when comparing the run time between the two approaches. When utilizing noise levels of $\delta=0.03$ and $\delta=0.1$ for the same grid and $f_2$, the computing time for $\mathcal{S}_0^*$ is tremendously larger than that for $\mathcal{I}_0^*$, see Table \ref{time}.

\begin{table}[H]
\centering
\begin{tabular}{cccr}
\toprule
Choice of adjoint & Noise level & Relative error & Run time \\
\midrule
\multirow{2}{*}{$\mathcal{I}_0^*$} & 3\%  & 1.30\% & 5 min \phantom{0}5 s \\ 
                                   & 10\% & 3.39\% & 3 min 32 s \\
\midrule
\multirow{2}{*}{$\mathcal{S}_0^*$} & 3\%  & 1.27\% & 178 min 53 s \\ 
                                   & 10\% & 3.34\% & 124 min 25 s \\
\bottomrule
\end{tabular}
\caption{Run time for reconstructing $f_2$ and relative $L^2$-error of Landweber iteration without Nesterov acceleration for both representations of adjoint operators}
\label{time}
\end{table}

At the same time it is remarkable that the reconstruction errors remain almost unchanged, while the run time for iterations using the PDE approach is significantly larger compared to the integral method. Nevertheless the PDE-based expression of the adjoints have to be seen as useful for analytical investigations because of its versatility.

\begin{figure}[H]
\begin{center}
\includegraphics[scale=0.7]{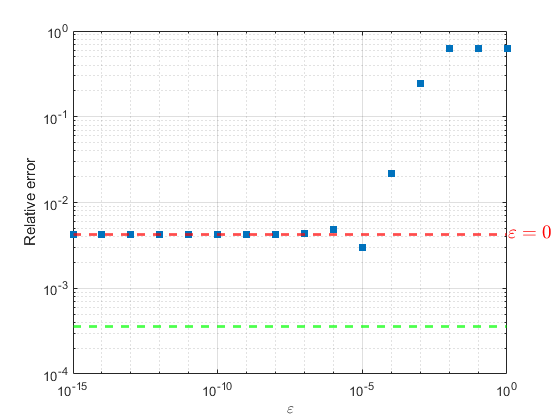}
\caption{Relative errors $\textrm{err}((\mathcal{S}_0^\varepsilon)^*)$ for $\varepsilon> 0$ (blue), $\textrm{err}((\mathcal{S}_0^0)^*)$ (red) and $\textrm{err}(\mathcal{I}_0^*)$ (green) for $f^{(2)}(x)=(x_1^2-2x_2^2,-2x_1 x_2)^\top$}
\label{err_dual2}
\end{center}
\end{figure}

\subsection{Numerical results for variable $n(x)$}
\label{Sec:non_Euclid}

We consider the situation of variable refractive index $n(x)$ and attenuation $\alpha(x)$. Because of the large computation times caused by solving the transport equation we only consider the representation $\mathcal{I}_\alpha^*$ when using the adjoint. Note, that $\xi=\xi(x)$ depends on $x\in M$ for variable $n(x)$ and so do their parameterizations:
\begin{align*}
x_{p}=\begin{pmatrix}
\cos\mu_p\\\sin\mu_p
\end{pmatrix},\quad \mu_p = \frac{2\pi p}{P},\qquad
\xi_{p,q} = n^{-1}(x_{p})\begin{pmatrix}
\cos\varphi_q\\\sin\varphi_q
\end{pmatrix},\quad \varphi_q = \frac{2\pi q}{Q}.
\end{align*}
for $p=1,\ldots,P$, $q=1,\ldots,Q$. First, the geodesic curve $\gamma = \gamma_{x_{p},\xi_{p,q}}$ must be calculated. We restrict to $(x_{p},\xi_{p,q})\in \partial_{+}\Omega M$ and calculate the solution of the geodesic equation \eqref{geodesic_eq} backwards from $\tau=0$ to $\tau = \tau_{-}(x_{p},\xi_{p,q})$. For each pair $(x_{p},\xi_{p,q})$ we denote the grid points by $\tau_s$, $s=0,\dots, S=S_{p,q}$, where $\tau_0=0$
and $\tau_S = \tau_{-}(x_{p},\xi_{p,q})$. The system of ordinary differential equations is solved using the Runge-Kutta scheme of order $4$ and a constant step size $\Delta \tau>0$. We continue calculating points $\gamma(\tau_s)$ of the geodesic until we obtain a point $\gamma(\tau_S)$ in $\mathbb{R}^2\backslash M$. We can assume that $n(x)\approx 1$ at points $x\in \partial M$. (Note, that $n\equiv 1$ in $\mathbb{R}^2\backslash M$.) For sufficiently small $\Delta \tau$, we can therefore assume that $\gamma(\tau_S)$ does not deviate significantly from the exact trajectory. Therefore, we determine the true entry point $\gamma(\tau_{S}^*)$ as the intersection of the line segment connecting $\gamma(\tau_{S-1})$, $\tilde{\gamma}(\tau_{S})$ and the boundary $\partial M$. The discretization of the sampling points to evaluate the integrals is hence no longer constant. We approximate the increment $\Delta\tau^*$ between $\gamma(\tau_{S-1})$ and $\tilde{\gamma}(\tau_{S})$ by scaling
\begin{align*}
\Delta\tau^* = \frac{\Vert\gamma(\tau_{S}) - \gamma(\tau_{S-1})\Vert_{\text{eucl}}}{\Vert {\gamma}(\tau_{S}^*) - \gamma(\tau_{S-1})\Vert_{\text{eucl}}}\cdot \Delta \tau.
\end{align*}
Using the trapezoidal sum, the approximation $A_{p,q,s}$ of the inner integral in \eqref{Iaf} is given for $0<s<S=S_{p,q}$ by
\begin{align*}
-\int_{\tau_s}^{0}\alpha(\gamma_{x_{p},\xi_{p,q}}(\sigma))\mathrm{d}\sigma\approx A_{p,q,s}\coloneqq 
\frac{1}{2} \Delta\tau\left(\sum_{\tilde{s}=1}^{s-1}\alpha(\gamma_{x_{p},\xi_{p,q}}(\tau_{\tilde{s}})) + 
\sum_{\tilde{s}=2}^{s}\alpha(\gamma_{x_{p},\xi_{p,q}}(\tau_{\tilde{s}}))\right).
\end{align*}
For $s=0$ we obviously get $A_{p,q,0}=0$. For $s= S$ we specifically define
\begin{align*}
A_{p,q,S} = A_{p,q,S-1} + \frac{1}{2}\Delta\tau^* \left(\alpha(\gamma_{x_{p},\xi_{p,q}}(\tau_{S-1})) + \alpha(\gamma_{x_{p},\xi_{p,q}}(\tau_{S})) \right).
\end{align*}
This yields
\begin{align*}
\mathcal{I}_{\alpha}f(x_{p},\xi_{p,q})
&\approx - \frac{\Delta\tau}{2}\sum_{s=0}^{S-2}\langle f(\gamma_{x_{p},\xi_{p,q}}(\tau_s)),\dot{\gamma}_{x_{p},\xi_{p,q}}(\tau_s)\rangle \exp( A_{p,q,s})\\
&- \frac{\Delta\tau}{2}\sum_{s=1}^{S-1}\langle f(\gamma_{x_{p},\xi_{p,q}}(\tau_s)),\dot{\gamma}_{x_{p},\xi_{p,q}}(\tau_s)\rangle \exp( A_{p,q,s})\\
&- \frac{\Delta\tau^*}{2} \langle f(\gamma_{x_{p},\xi_{p,q}}(\tau_{S-1})),\dot{\gamma}_{x_{p},\xi_{p,q}}(\tau_{S-1})\rangle \exp( A_{p,q,S-1})\\
&- \frac{\Delta\tau^*}{2} \langle f(\gamma_{x_{p},\xi_{p,q}}(\tau_{S})),\dot{\gamma}_{x_{p},\xi_{p,q}}(\tau_{S})\rangle \exp( A_{p,q,S}).
\end{align*}
Note that all arising inner products are to be understood with respect to the metric $g$. The numerical calculation of $\mathcal{I}_\alpha^*$, which is \eqref{eq:adj_dynamic_int} for static functions $w(x,xi)$, has to be adapted accordingly. For $x\in M$, we parameterize for $r=1,\dots,R$, $p=1,\dots,P$ and $q=1,\dots,Q$
\begin{align*}
x_{r,p}=\rho_r\begin{pmatrix}
\cos\mu_p\\\sin\mu_p
\end{pmatrix},\quad \rho_r = \frac{r}{R} ,\quad \mu_p = \frac{2\pi p}{P},
\qquad
\xi_{r,p,q} = n^{-1}(x_{r,p})\begin{pmatrix}
\cos\varphi_q\\\sin\varphi_q
\end{pmatrix},\quad \varphi_q = \frac{2\pi q}{Q}.
\end{align*}
First, the inner integral in \eqref{eq:adj_dynamic_int} is approximated for variable $\alpha(x)$. To this end, the geodesic equation \eqref{geodesic_eq} is solved forward in time. Again, we calculate further points of the geodesics with a step size $\Delta\tau$ starting from $\tau=0$ until we leave $M$ at $\tau_S$. We determine the true exit point $\gamma(\tau_{S}^*)$ as the intersection of the (Euclidean) line segment of $\gamma(\tau_{S-1})$ to $\tilde{\gamma}(\tau_{S})$ and the boundary $\partial M$. Hence, the last step size $\Delta\tau^*$ is defined as
\begin{align*}
\Delta\tau^* = \frac{\Vert\gamma(\tau_{S}) - \gamma(\tau_{S-1})\Vert_{\mathrm{eucl}}}{\Vert {\gamma}(\tau_{S}^*) - \gamma(\tau_{S-1})\Vert_{\mathrm{eucl}}}\cdot \Delta \tau.
\end{align*}
For $d=2$, we have
\begin{align*}
\Xi_n(x_,\xi) = \frac{1}{2}n^{-1}(x)\langle\nabla n(x),\xi\rangle = \frac{1}{2}n^{-1}(x)\left(\partial_1 n(x)\xi_1 + \partial_2 n(x)\xi_2\right).
\end{align*}
The approximation of \eqref{eq:w_dyn} for $\gamma_{x_{r,p},\xi_{r,p,q}}$ is given by
\begin{align*}
&-\int_{0}^{\tau_{S}}(\alpha+\Xi_n)(\gamma_{x_{r,p},\xi_{r,p,q}}(\sigma),\dot{\gamma}_{x_{r,p},\xi_{r,p,q}}(\sigma))\mathrm{d}\sigma\\
&\approx
- \frac{1}{2} \Delta\tau\left(\sum_{\tilde{s}=1}^{S-2}(\alpha+\Xi_n)(\gamma_{x_{p},\xi_{p,q}}(\tau_s),\dot{\gamma}_{x_{p},\xi_{p,q}}(\tau_s)) + 
\sum_{\tilde{s}=2}^{S-1}(\alpha+\Xi_n)(\gamma_{x_{p},\xi_{p,q}}(\tau_s),\dot{\gamma}_{x_{p},\xi_{p,q}}(\tau_s))\right)\\
&\quad - \frac{1}{2}\Delta\tau^*\left( (\alpha+\Xi_n)(\gamma_{x_{p},\xi_{p,q}}(\tau_{S-1}),\dot{\gamma}_{x_{p},\xi_{p,q}}(\tau_{S-1}))   + (\alpha+\Xi_n)(\gamma_{x_{p},\xi_{p,q}}(\tau_{S}),\dot{\gamma}_{x_{p},\xi_{p,q}}(\tau_{S}))     \right).
\end{align*}
The denominator in \eqref{denominator} is computed as
\begin{align*}
\langle \nu_{\gamma_{x_{r,p},\xi_{r,p,q}}(\tau_S)},\dot{\gamma}_{x_{r,p},\xi_{r,p,q}}(\tau_S)\rangle
= n^{-2}(x_{r,p})\langle\gamma_{x_{r,p},\xi_{r,p,q}}(\tau_S), \dot{\gamma}_{x_{r,p},\xi_{r,p,q}}(\tau_S)\rangle_{\mathrm{eucl}}.
\end{align*}
Finally, we evaluate $h$ in \eqref{denominator} at $(\tilde{x}_{r,p},\tilde{\xi}_{r,p,q})\in\partial_{+}\Omega M$. In contrast to the Euclidean case $n\equiv 1$, the points $\tilde{x}_{r,p}$ and tangents $\tilde{\xi}_{r,p,q}$ are not necessarily located on a grid. Therefore, a two-dimensional interpolation in the variables $\mu$ and $\varphi$ is required. The approximation of the adjoint operator $\mathcal{I}_\alpha^*$ is finally given as
\begin{align*}
[\mathcal{I}_\alpha^* h](x_{r,p})&\approx \frac{2\pi}{Q}\sum_{q=1}^{Q}w(x_{r,p},\xi_{r,p,q})\xi_{r,p,q}.
\end{align*}
We evaluate this approximation by means of several numerical experiments. First, we consider the vector field $f^{(3)}(x) = (x_1,-x_2)^\top$ using $\alpha =0$ and a slowly varying refractive index $n_{0.002}^{(1)}(x) = 0.002\vert x\vert^2 +\frac{4}{3}$ such that condition \eqref{eq:vierus_condition} is satisfied. Again, the Landweber iteration is used as reconstruction method. The absolute errors are visualized in Figure \ref{visco_plot_6}. We see that the reconstructions show a good accuracy even in case of noisy data.

\begin{figure}[H]
\begin{center}
   \includegraphics[scale=0.5]{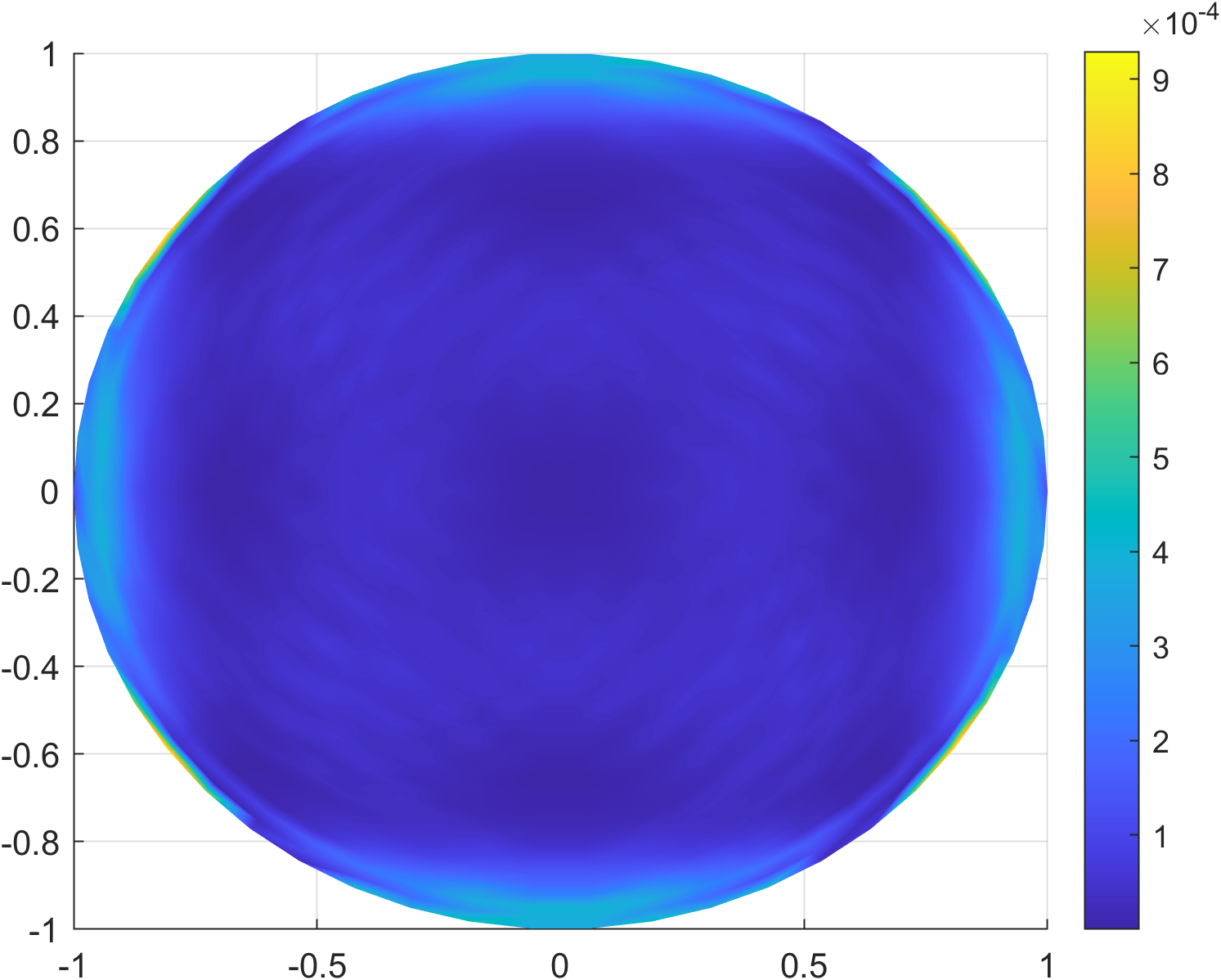}\qquad 
      \includegraphics[scale=0.5]{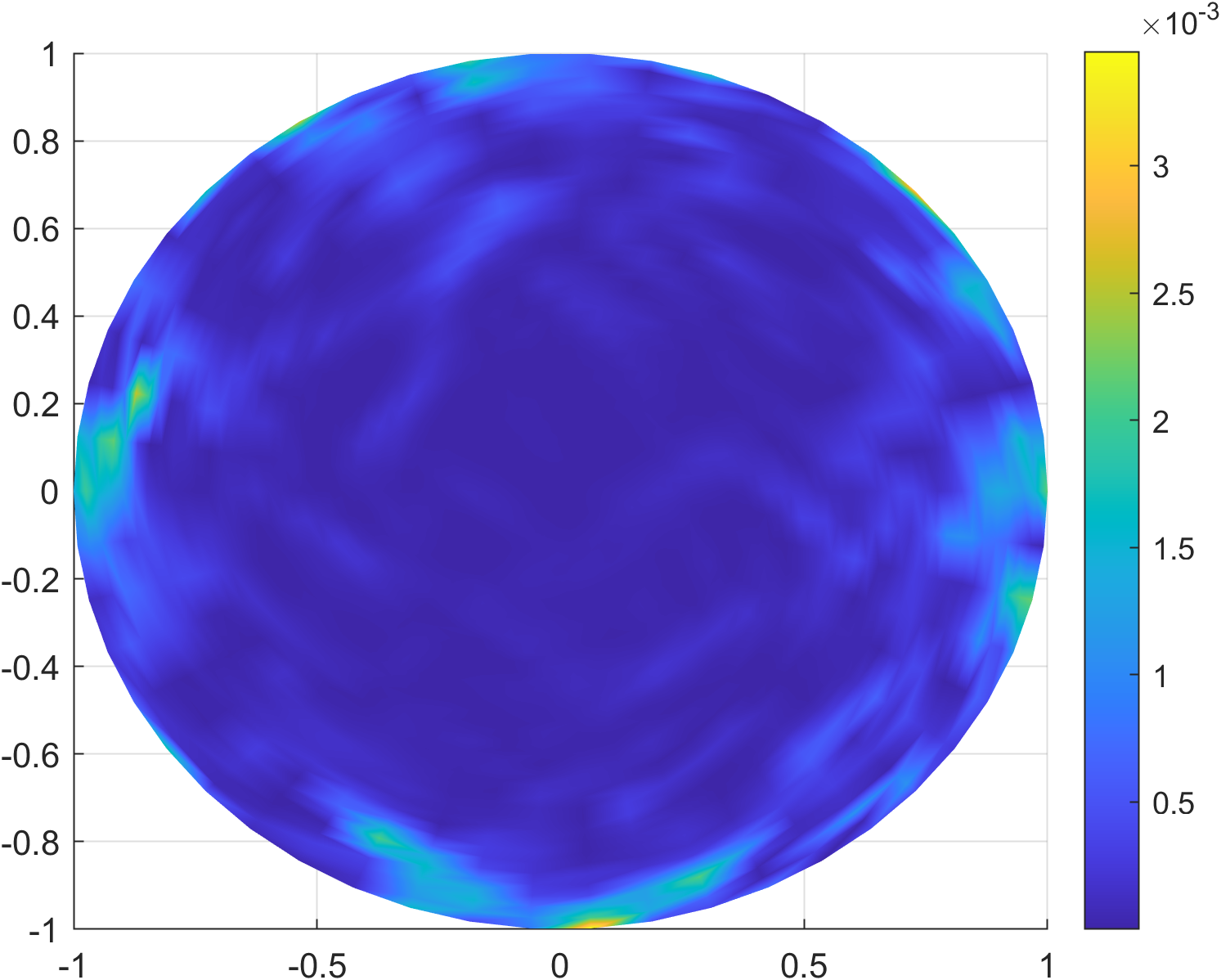}
  \caption{Absolute error in the reconstruction of $f^{(3)}$ for exact data (left picture) and noisy data with $3\%$ uniform noise (right picture) and parameters $(R,P,Q)=(34,106,106)$}
  \label{visco_plot_6}
\end{center}
\end{figure}  

A natural question which arises is, whether incorporating a varying $n(x)$ in the reconstruction really pays regarding both, accuracy and computation time, especially, if the geodesics only slighty deviate from straight lines. For this purpose, we apply $\mathcal{I_\alpha}$ to a vector field, add noise to the result, and then calculate the reconstruction in the Euclidean ($n=1$) setting and including a variable $n(x_1,x_2) = 1+0.002(x_1^2+x_2^2)$. We use Landweber's method with Nesterov's acceleration (cf. \cite{neubauer2017nesterov}) and a relaxation parameter of $\omega = 0.01$. Table \ref{best_result} shows results for different attenuation coefficients $\alpha$.

\begin{table}[H]
\centering
\begin{tabular}{ccccr}
\toprule
Noise level & Attenuation & Refraction (y/n) & Relative $L^2$-error & Run time \\
\midrule
0     & 0.01 & n & 0.0555 &\hfill 21min 21s \\
0     & 0.01 & y & 0.0132 & \hfill 52min \phantom{0}2s \\
0.01  & 0.01 & n & 0.0559 & \hfill 10min 55s  \\
0.01  & 0.01 & y & 0.0144 & \hfill 69min \phantom{0}9s \\
\midrule
0     & 0.02 & n & 0.0556 & \hfill 26min 19s \\
0     & 0.02 & y & 0.0134 & \hfill 145min 34s \\
0.01  & 0.02 & n & 0.0557 & \hfill 14min 40s  \\
0.01  & 0.02 & y & 0.0145 & \hfill 107min \phantom{0}1s \\
\bottomrule
\end{tabular}
  \caption{Comparison of relative error after reconstructing with Euclidean and non-Euclidean model for $(R, P, Q)=(34,106,106)$} \label{best_result}
\end{table}

Table \ref{best_result} clearly shows that, regardless of the choice of noise level or absorption coefficient, the method with refraction taken into account reduces the relative error by about $75\%$, but leading, on the other hand, to a significantly higher run time. Further experiments proved that greater fluctuations in $n(x)$ necessitates the use of the generalized model for variable $n(x)$, since, when ignoring it, the reconstruction errors grow tremendously. 


\section{Conclusions}

Dynamic refractive TFT is a very general framework of inverse problems concerning the reconstruction of tensor fields from integral data along geodesic curves, where the underlying metric is generated by the index of refraction. The inverse problem can be formulated as an inversion of a integral (ray) transform or, equivalently, as inverse source problem for an initial boundary value problem for a transport equation. The adjoint is a backprojection operator, where the integrand is either represented as a integral transform or the solution of a dual boundary value problem with terminating condition. We deduced these representations and performed a numerical comparative study using the Landweber method and different vector fields in two dimensions. Our main findings can be summarized as follows: using the integral representation of the adjoint increases the efficiency of the reconstructions and the incorporation of a variable refractive index is necessary to improve the reconstruction accuracy. Although the PDE representation of the adjoint leads to larger run times, it might be useful for analytical investigations because of its versatility. Future studies comprehend numerical computations taking time-depending tensor fields into account, and further analytical investigations for time-depending fields using both, the integral and PDE representations.


\subsection*{Acknowledgements} The research was funded by German Science Foundation (Deutsche Forschungsgemeinschaft, DFG) under LO 310/17-1 and SCHU 1978/19-1. The third author was further supported by the Sino-German Mobility Programme (M-0187) by the Sino-German Center for Research Promotion.


\bibliography{references} 

\end{document}